\DeclarePairedDelimiter\ceil{\lceil}{\rceil}
\newcommand{\arr}{\mathbb{R}}
\newcommand{\Mod}[1]{\ (\mathrm{mod}\ #1)}
\newtheorem{theorem}{Theorem}[section]
\newtheorem{corollary}[theorem]{Corollary}
\newtheorem{lemma}[theorem]{Lemma}
\newtheorem{proposition}[theorem]{Proposition}
\newtheorem{remark}[theorem]{Remark}
\title{Dot product chains}
\date{\today}
\author{Shelby Kilmer, Caleb Marshall, and Steven Senger}
\begin{document}
\maketitle
\begin{abstract}
We study a variant of Erd\H os' unit distance problem, concerning dot products between successive pairs of points chosen from a large finite point set. Specifically, given a large finite set of $n$ points $E$, and a sequence of nonzero dot products $(\alpha_1,\ldots,\alpha_k)$, we give upper and lower bounds on the maximum possible number of tuples of distinct points $(A_1,\dots, A_{k+1})\in E^{k+1}$ satisfying $A_j \cdot A_{j+1}=\alpha_j$ for every $1\leq j \leq k$.
\end{abstract}

\section{Introduction}
In \cite{Erd46}, Erd\H os introduced two popular problems in discrete geometry, the single distance problem and the distinct distances problem. Given a finite point set in the plane, the single distance problem asks how often a single distance can occur between pairs of points, while the distinct distances problem asks how many distinct distances must be determined by pairs of points. In the decades since these problems were first posed, they have been studied by many people, with varying degrees of success. See \cite{BMP, GIS} for surveys of these and related problems. The distinct distances problem was resolved in 2010 by Guth and Katz in \cite{GK}. One popular variant of this family of problems involves replacing the distance between two points by the dot product of two points, \cite{GIS, Steinerberger}.

In addition to studying the dot products determined by pairs of points chosen from a set, there has been much interest in studying dot products determined by larger subsets of points. See \cite{Bahls, Fickus, IS, TV12} for some examples and applications. In this note, we concern ourselves with chains, which are sequences of points restricted by the dot products between successive pairs. We borrow notation from related problems on distances in \cite{BIT, PSS}. Specifically, if we fix a $k$-tuple of real numbers, $(\alpha_1, \alpha_2, \dots, \alpha_k)$, then a {\it $k$-chain} of that type is a $(k+1)$-tuple of points, $(R_1, R_2, \dots, R_{k+1}),$ such that for all $j=1,\dots, k,$ we have $R_j\cdot R_{j+1}=\alpha_j.$ For example, if we fix a triple of real numbers, $(\alpha, \beta, \gamma)$, a 3-chain of that type will be a set of four points, where the dot product of the first two points is $\alpha$, the dot product of the middle two points is $\beta$, and the dot product of the last two points is $\gamma.$ We follow convention and refer to 2-chains as hinges.

We will assume a given $k$-tuple $(\alpha_1, \alpha_2, \dots, \alpha_k)$ consists of nonzero real numbers, unless explicitly stated otherwise. Also, if two quantities, $X(n)$ and $Y(n)$, vary with respect to some natural number parameter, $n$, then we write $X(n) \lesssim Y(n)$ if there exist constants, $C$ and $N$, both independent of $n$, such that for all $n> N$, we have $X(n)\leq CY(n)$. If $X(n) \lesssim Y(n)$ and $Y(n) \lesssim X(n)$, we write $X(n) \approx Y(n).$

\subsection{Main results}

In \cite{BS}, Dan Barker and the third listed author gave the following bound on the number of hinges (2-chains) in a large finite point set in the plane. They go on to show that this bound is tight.

\begin{theorem}\label{hinges}
Given a large, finite set $E$ of $n$ points in $\mathbb R^2$, and a pair of nonzero real numbers $(\alpha_1, \alpha_2)$, the maximum number of triples of points, $(R_1, R_2, R_3)\in E^3$ such that $R_1\cdot R_2 = \alpha_1$ and $R_2\cdot R_3 = \alpha_2$ is no more than $\lesssim n^2.$
\end{theorem}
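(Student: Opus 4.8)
The plan is to organize the count by the middle point $R_2$ of each hinge. Since $\alpha_1,\alpha_2\neq 0$, no hinge can have the origin as its middle point, so I may assume $R_2\neq 0$ throughout. For such a fixed $R_2$, the condition $R_1\cdot R_2=\alpha_1$ forces $R_1$ onto the line $\ell_1(R_2)=\{x\in\mathbb{R}^2: x\cdot R_2=\alpha_1\}$, and likewise $R_3$ must lie on the parallel line $\ell_2(R_2)=\{x: x\cdot R_2=\alpha_2\}$. Writing $f(R_2)=|E\cap\ell_1(R_2)|$ and $g(R_2)=|E\cap\ell_2(R_2)|$ for the number of admissible choices of $R_1$ and of $R_3$, the total number of hinges is exactly
\[
\sum_{R_2\in E} f(R_2)\,g(R_2).
\]
The whole problem thus reduces to bounding this weighted sum, and one cannot simply bound each factor by $n$, since that would give the far weaker $n^3$.

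To handle the product I would apply the elementary inequality $f(R_2)g(R_2)\leq \tfrac12\bigl(f(R_2)^2+g(R_2)^2\bigr)$ (Cauchy--Schwarz would serve equally well), which reduces the task to the two symmetric bounds $\sum_{R_2} f(R_2)^2\lesssim n^2$ and $\sum_{R_2} g(R_2)^2\lesssim n^2$; it suffices to treat the first. The key step is to reinterpret this sum as a count of triples: $f(R_2)^2$ is the number of ordered pairs $(R_1,R_1')\in E^2$ with $R_1\cdot R_2=R_1'\cdot R_2=\alpha_1$, so $\sum_{R_2} f(R_2)^2$ counts triples $(R_1,R_1',R_2)$ satisfying both equations. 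I would then fix $R_1$ and $R_1'$ and ask how many $R_2$ can complete them. When $R_1$ and $R_1'$ are linearly independent, the pair of equations $R_2\cdot R_1=\alpha_1$ and $R_2\cdot R_1'=\alpha_1$ is a nonsingular linear system in the unknown $R_2$, so at most one $R_2$ exists; this case contributes at most $n^2$ triples. When $R_1$ and $R_1'$ are linearly dependent, writing $R_1'=\lambda R_1$ and using $\alpha_1\neq 0$ forces $\lambda=1$, hence $R_1=R_1'$, and the surviving triples $(R_1,R_1,R_2)$ number at most $n^2$ trivially. Summing the two cases gives $\sum_{R_2} f(R_2)^2\lesssim n^2$, and the identical argument yields the bound for $g$, which completes the proof.

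I expect the substantive content to be the determinacy step rather than any intricate estimate: the bound hinges on the observation that two generic dot-product constraints against a common point pin that point down to a single possibility, which is exactly the linear-algebraic fact that makes the planar case tractable. The only real subtlety is the degenerate regime where $R_1$ and $R_1'$ are parallel (and the excluded case $R_2=0$), where the linear system fails to be nonsingular; I would dispose of it by the short argument above, noting that $\alpha_1\neq0$ collapses parallel pairs onto the diagonal $R_1=R_1'$. Since only crude counting is used, the method gives $n^2$ without invoking any incidence theorem, and the same skeleton should generalize to longer chains by iterating the reduction over successive middle points.
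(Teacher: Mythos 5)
Your proof is correct. One thing to know for the comparison: this paper never proves Theorem \ref{hinges} itself --- it is imported from Barker--Senger \cite{BS} --- so the natural benchmark is the tool from that argument which the paper does reproduce, namely Lemma \ref{singlePoint}: two points not on a common radial line admit exactly one point having prescribed nonzero dot products with both. Your ``nonsingular linear system'' step is precisely this lemma (for nonzero points, linear independence is the same as not lying on a common radial line), so the core determinacy idea is identical; what differs is the bookkeeping. The direct route fixes the endpoint pair $(R_1,R_3)$ and counts middle points: at most one when $R_1,R_3$ are independent, while if $R_3=\lambda R_1$ then the existence of any valid $R_2$ forces $\lambda=\alpha_2/\alpha_1$, so each $R_1$ has at most one ``bad'' partner on its radial line, and such a pair contributes at most $n$ middle points --- again $\lesssim n^2$ in total. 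You instead fix the middle point, symmetrize via $fg\le\tfrac12\left(f^2+g^2\right)$, and then run the endpoint-pair count only in the symmetric situation $\alpha=\beta=\alpha_1$, where linear dependence collapses onto the diagonal $R_1=R_1'$. Your Cauchy--Schwarz detour is avoidable, but it buys a slightly cleaner degenerate case (equal dot products turn ``same radial line'' into ``same point''), whereas the direct count must explicitly handle coincident $\alpha$-lines carrying up to $n$ points; the price is that you prove the quadratic bound twice, once for $f$ and once for $g$. Both routes are elementary, need no incidence theorem (Szemer\'edi--Trotter enters this paper only through Lemma \ref{singleDP}), and give the sharp $n^2$.
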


In this note, we continue the work in \cite{BS} by offering upper bounds on the number of times that a $k$-chain of a given type can occur in any large finite point set. In what follows, we will always assume that $k,$ the length of the chain, is constant with respect to $n,$ the total number of points in a given large, finite point set. We denote this by $k\lesssim 1.$

\begin{theorem}\label{2chainzgeneral}
Given a large, finite set $E$ of $n$ points in $\mathbb R^2$ and a natural number $k\lesssim 1$, the maximum number of $k$-chains of the type $(\alpha_1,\dots, \alpha_k)$ that can exist in $E$, is $\lesssim n^\frac{2(k+1)}{3}.$
\end{theorem}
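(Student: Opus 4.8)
The plan is to recast the chain condition as a point--line incidence problem and then run an induction that simultaneously controls the first and second moments of the partial chain counts. First I would record the basic dictionary. Fix a nonzero $\alpha$. For a point $P=(p_1,p_2)$ the locus $\{Q:P\cdot Q=\alpha\}$ is the line $\ell_{P,\alpha}=\{(x,y):p_1x+p_2y=\alpha\}$, and since $\alpha\neq 0$ the map $P\mapsto \ell_{P,\alpha}$ is injective. Writing $d_j(R)=|\{S\in E:R\cdot S=\alpha_j\}|=|E\cap\ell_{R,\alpha_j}|$ for the relevant ``degree,'' I would establish two estimates. The \emph{first moment} bound $\sum_{R\in E}d_j(R)\lesssim n^{4/3}$ is exactly Szemer\'edi--Trotter applied to the $n$ points of $E$ and the (at most $n$) lines $\{\ell_{R,\alpha_j}:R\in E\}$. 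The \emph{second moment} bound $\sum_{R\in E}d_j(R)^2\lesssim n^2$ is elementary: expanding, $\sum_R d_j(R)^2=|\{(R,S,S')\in E^3:R\cdot S=\alpha_j=R\cdot S'\}|$, and for fixed $S\neq S'$ the two \emph{distinct} lines $\ell_{S,\alpha_j},\ell_{S',\alpha_j}$ meet in at most one point, so the off-diagonal contributes $\lesssim n^2$ while the diagonal contributes $\sum_R d_j(R)\le n^2$.

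Next I would set up the induction. Let $F_m(R)$ be the number of $m$-chains $(R_1,\dots,R_{m+1})$ with $R_{m+1}=R$, so that $F_0\equiv 1$, that $F_m(R)=\sum_{S:S\cdot R=\alpha_m}F_{m-1}(S)$, and that the quantity to be bounded is $N_m=\sum_R F_m(R)$. I would track the two sums $A_m=\sum_R F_m(R)$ and $B_m=\sum_R F_m(R)^2$. Summing the recursion gives $A_m=\sum_S F_{m-1}(S)\,d_m(S)$, so by Cauchy--Schwarz and the second moment bound
\[
A_m\le \Big(\sum_S F_{m-1}(S)^2\Big)^{1/2}\Big(\sum_S d_m(S)^2\Big)^{1/2}\lesssim n\,B_{m-1}^{1/2}.
\]
Squaring the recursion and interchanging the order of summation gives $B_m=\sum_{S,S'}F_{m-1}(S)F_{m-1}(S')\,|\{R:S\cdot R=\alpha_m=S'\cdot R\}|$; splitting into $S=S'$ and $S\neq S'$ and using the same ``two lines meet once'' fact yields
\[
B_m\lesssim \Big(\sum_S F_{m-1}(S)\Big)^2+n\sum_S F_{m-1}(S)^2= A_{m-1}^2+n\,B_{m-1}.
\]

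Finally I would close the induction with the targets $A_m\lesssim n^{2(m+1)/3}$ and $B_m\lesssim n^{(4m+2)/3}$. The base case $m=1$ is immediate: $A_1=N_1\lesssim n^{4/3}$ by Szemer\'edi--Trotter and $B_1=\sum_R d_1(R)^2\lesssim n^2$. For the inductive step, the two displayed recursions give $A_m\lesssim n\cdot n^{(2m-1)/3}=n^{2(m+1)/3}$ and $B_m\lesssim n^{4m/3}+n^{(4m+1)/3}\lesssim n^{(4m+2)/3}$, as required. Taking $m=k$ yields $N_k=A_k\lesssim n^{2(k+1)/3}$. (Since I am bounding all tuples, dropping any distinctness requirement only decreases the count, so the same bound holds for chains of distinct points.)

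The step I expect to be the main obstacle is isolating the right inductive quantities. Tracking $N_m$ alone gives no usable recursion, because extending a chain by one edge weights each endpoint $R$ by $d_m(R)$, and a priori the vertices carrying many partial chains could be exactly the high-degree vertices; Cauchy--Schwarz converts this difficulty into the need to control the second moment $B_m$, which is why both sums must be propagated together. Everything then rests on the elementary estimate $\sum_R d_\alpha(R)^2\lesssim n^2$, with Szemer\'edi--Trotter entering only once, at the base case; the single lossy step is the diagonal bound $\sum_S F_{m-1}(S)^2 d_m(S)\le n\,B_{m-1}$.
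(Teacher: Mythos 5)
Your proof is correct, and it takes a genuinely different route from the paper's. The paper partitions the $(k+1)$-tuple into disjoint consecutive blocks according to the residue of $k$ modulo $3$, drops the dot product constraints joining the blocks, and multiplies per-block bounds: $n^2$ for each hinge (quoted from \cite{BS} as Theorem \ref{hinges}) and $n^{4/3}$ for each leftover pair (Lemma \ref{singleDP}). You instead keep every constraint and induct on chain length, propagating the first and second moments $A_m=\sum_R F_m(R)$ and $B_m=\sum_R F_m(R)^2$ of the partial chain counts through the recursions $A_m\lesssim n\,B_{m-1}^{1/2}$ and $B_m\lesssim A_{m-1}^2+n\,B_{m-1}$; all steps check out (injectivity of $P\mapsto\ell_{P,\alpha}$ for $\alpha\neq 0$, the two degree-moment bounds, Cauchy--Schwarz, and the induction closing with exponents $\tfrac{2(m+1)}{3}$ and $\tfrac{4m+2}{3}$), and since $k\lesssim 1$ the implied constants stay bounded. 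Your approach buys two things: the bound is proved uniformly, with no case analysis modulo $3$, and the argument is essentially self-contained --- Szemer\'edi--Trotter is the only external input, entering once at the base case, because your second-moment estimate only ever compares two lines $\ell_{S,\alpha_m}$ and $\ell_{S',\alpha_m}$ carrying the \emph{same} dot product, for which distinctness (hence intersection in at most one point) is automatic; the radial-line coincidence issue of Lemma \ref{sameRadial}, which forces care in the general hinge bound, never arises, so in effect you reprove the only special case of Theorem \ref{hinges} you need. The paper's modular approach buys flexibility: because it multiplies black-box hinge and single-dot-product bounds, sharper or restricted inputs can be substituted wholesale, which is exactly how Corollaries \ref{lightLines} and \ref{hiDim} are obtained; to recover those refinements with your scheme you would have to rework the moment estimates under the corresponding hypotheses.
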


The next result gives a lower bound for how many $k$-chains of a given type we can construct in $\mathbb R^2.$

\begin{proposition}\label{sharpChains}
There exists a set $E$ of $n$ points in $\mathbb{R}^2$ and a $k$-tuple of nonzero real numbers $(\alpha_1,\dots, \alpha_k)$, for which which there are at least $n^{\ceil{(k+1)/2}}$ instances of $k$-chains of the type $(\alpha_1,\dots, \alpha_k),$ for any $k\lesssim 1.$
\end{proposition}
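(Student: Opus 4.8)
The plan is to exhibit an explicit point set in which roughly half of the points of a chain can be chosen completely freely while the rest are held fixed, and in which \emph{every} dot product constraint holds automatically. The conceptual crux — and the only step I expect to require any real idea — is to decouple the $k$ constraints, since in a naive configuration the condition $R_j\cdot R_{j+1}=\alpha_j$ propagates down the chain and leaves only one free parameter. The device that breaks this propagation is to place the fixed points on a line through the origin: if $Q$ lies on such a line, then $P\cdot Q$ depends only on a single coordinate of $P$, so pinning down that one coordinate of the free points makes each constraint independent of their remaining degree of freedom.

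Concretely, I would fix $\beta\neq 0$, declare the even-indexed points to be fixed on the $y$-axis by setting $R_{2i}=(0,c_i)$ for distinct nonzero scalars $c_i$, and let each odd-indexed point range over the horizontal line $y=\beta$, i.e. have the form $(x,\beta)$ with $x\neq 0$. The set $E$ then consists of $m$ points $(x,\beta)$ with distinct nonzero $x$-coordinates together with the $\floor{(k+1)/2}$ fixed points on the $y$-axis, so that $m=n-\floor{(k+1)/2}\approx n$ because $k\lesssim 1$. The crucial computation is that $(x,\beta)\cdot(0,c_i)=\beta c_i$, which is independent of $x$. Hence, setting $\alpha_j:=\beta\, c_{\ceil{j/2}}$ makes every constraint $R_j\cdot R_{j+1}=\alpha_j$ hold for \emph{all} admissible choices of the free points: the two constraints meeting at a fixed point $R_{2i}=(0,c_i)$ are $\alpha_{2i-1}=\alpha_{2i}=\beta c_i$, which is consistent, and each $\alpha_j$ is nonzero since $\beta$ and the $c_i$ are.

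With this configuration a valid $k$-chain is produced simply by choosing each of the $\ceil{(k+1)/2}$ odd-indexed points independently among the $m\approx n$ points on the line $y=\beta$, taking the even-indexed points to be the prescribed $y$-axis points; all constraints then hold by the computation above. Requiring the entries of the tuple to be distinct only discards configurations in which two of the constantly many free points coincide, a lower-order loss, so the number of chains is at least $m(m-1)\cdots\bigl(m-\ceil{(k+1)/2}+1\bigr)\gtrsim n^{\ceil{(k+1)/2}}$. I expect the remaining work to be purely bookkeeping: confirming the index $\ceil{j/2}$ attached to each $\alpha_j$ is correct at both endpoints and for each parity of $k$, and checking distinctness of all points, which is guaranteed by taking the $c_i$ distinct and excluding $x=0$ from the free line.
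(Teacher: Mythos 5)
Your construction is correct and is essentially the paper's own proof: the paper likewise fixes the even-indexed points on a single radial line (there the line $y=x$, at the points $(i,i)$) and lets every odd-indexed point range over one line perpendicular to it, so that each dot product with a fixed point is constant along the free line and the type is forced to satisfy $\alpha_{2i-1}=\alpha_{2i}$, exactly as in your setup. Your choice of the $y$-axis with free points on $y=\beta$ is just a cleaner coordinate normalization of the same idea (and your count of $\lceil (k+1)/2\rceil$ free odd indices is handled a bit more carefully than the paper's).
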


This result is surprising because the corresponding estimates for chains of distances appeared to be the same from previous investigations of 1- and 2-chains, but for higher values of $k$, their behaviors are very different. In particular, the best known upper bound on the number of times a single distance (1-chain) can occur is $n^\frac{4}{3}.$ The upper bound on the number of times a particular distance hinge (2-chain) can occur is $n^2,$ which is sharp. For dot products, we have the same upper bounds on 1- and 2-chains. Moreover, both bounds are known to be sharp.

However, the similarities end there. Both upper and lower bounds on the number of $k$-chains are different for larger values of $k.$ This is addressed in further detail in Subsection \ref{starlikeProof}. For example, Proposition \ref{sharpChains} tells us that we can construct a large finite set of $n$ points for which there are $\gtrsim n^4$ occurrences of a particular dot product 6-chain, but the main result in \cite{PSS} (Theorem 1.1 in that paper), implies that there can be no more than $\lesssim n^\frac{1079}{300}\lesssim n^{3.6}$ occurrences of any distance 6-chain.

Note, we have only considered nonzero dot products here, because of point sets like the one in the following construction.
\begin{remark}\label{2Lenz}
For any $k\lesssim 1$, there are infinitely many $n$ for which we can get $\gtrsim n^{k+1}$ occurrences of $k$-chains of the type $(0, \dots,0)$ by putting $n/2$ points on the $x$-axis and $n/2$ points on the $y$-axis.
\end{remark}

\subsection{Special cases}

One key feature of the construction in the proof of Proposition \ref{sharpChains} is the fact that dot products can repeat in ways that distances cannot. However, with a restriction on the types of point sets under consideration, so that this overlap of dot products is forbidden, we can get much better results by slightly modifying the proof of the main two-dimensional result in \cite{FK} (Theorem 2 in that paper). We discuss this further below.
\newpage

\begin{corollary} \label{starlike}
Given a large, finite set $E$ of $n$ points in $\mathbb R^2,$ with the property that no two points of $E$ lie on the same line as the origin, any $\epsilon >0,$ and a natural number $k\lesssim 1$, the maximum number of $k$-chains of the type $(\alpha_1,\dots, \alpha_k)$ that can exist in $E$ is no more than
$$\lesssim
\begin{cases}
n^\frac{k+3}{3} \qquad & \text{ if } k \equiv 0 \Mod{3}, \\
n^{\frac{k+3}{3}+\epsilon} & \text{ if } k \equiv 1 \Mod{3}, \\
n^\frac{k+4}{3} & \text{ if } k \equiv 2 \Mod{3}.
\end{cases}$$
\end{corollary}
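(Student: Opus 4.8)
The plan is to recast the problem as one about point--line incidences and then run the inductive scheme behind Theorem~2 of \cite{FK}, with the hypothesis that no two points of $E$ are collinear with the origin playing the role of the non-degeneracy that the distance setting supplies for free. For a fixed nonzero $\alpha$ and a point $R$, write $\ell(R,\alpha)=\{x\in\mathbb{R}^2 : R\cdot x=\alpha\}$ for the line of points having dot product $\alpha$ with $R$. Since $\alpha\neq 0$, distinct points yield distinct lines, so for each $j$ the family $\{\ell(R,\alpha_j):R\in E\}$ consists of $n$ distinct lines, and dually each line equals $\ell(R,\alpha_j)$ for exactly one $R$. In this language a $k$-chain $(R_1,\dots,R_{k+1})$ is exactly a path in which $R_{j+1}$ is incident to $\ell(R_j,\alpha_j)$ for every $j$, so counting chains becomes counting incidence-paths, and the Szemer\'edi--Trotter theorem (applied to $n$ points and $n$ lines) is the basic tool, replacing the congruent-circle incidence bound used for distances in \cite{FK}.

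First I would record three seed estimates. A $0$-chain count is trivially $n$; the number of $1$-chains is $\lesssim n^{4/3}$ by Szemer\'edi--Trotter; and the number of $2$-chains is $\lesssim n^2$ by Theorem~\ref{hinges}. The heart of the argument is then an extension step showing that appending three links to a chain costs only a factor of $n$, namely that the number $C_k$ of $k$-chains obeys $C_k\lesssim n\,C_{k-3}$, up to a dyadic logarithmic loss. Iterating this recursion from the three seeds reproduces the stated trichotomy exactly: the residues $k\equiv 0,1,2\Mod{3}$ inherit the seeds $C_0=n$, $C_1\lesssim n^{4/3}$, and $C_2\lesssim n^2$, giving $n^{(k+3)/3}$, $n^{(k+3)/3+\epsilon}$, and $n^{(k+4)/3}$ respectively. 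The logarithmic loss only surfaces as a genuine $n^\epsilon$ in the bound for $k\equiv 1$, since the other two residues have integer-exponent targets with polynomial slack to absorb it.

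The key geometric input for the extension step, and the place where the hypothesis on $E$ is used, is a duality observation: if a collection of pivots $B$ all lie on a single line, then the constraint lines $\ell(B,\alpha)$ all pass through a common point and hence form a pencil. A pencil of $m$ lines has at most $n+m$ incidences with $E$, far fewer than the $\approx n^{4/3}$ that Szemer\'edi--Trotter would permit for $m$ arbitrary lines, and it is precisely this rigidity of interior vertices that upgrades the general estimate of Theorem~\ref{2chainzgeneral} to the present one. The assumption that no two points of $E$ share a line through the origin guarantees that the constraint lines attached to distinct pivots are genuinely distinct and cannot be forced to coincide; such coincidence is exactly the mechanism driving the extremal construction behind Proposition~\ref{sharpChains}, where stacking pivots on a common line through the origin lets one line serve many pivots at once and pushes the count up to $n^{\lceil (k+1)/2\rceil}$.

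The hard part will be making this extension step global rather than pointwise. A crude bound on the number of three-link extensions from a single endpoint is too lossy (for a grid it can be as large as $n^{3/2}$), so one cannot merely multiply a maximal extension count by $C_{k-3}$. Instead I would weight each endpoint $P$ by the number $g(P)$ of $(k-3)$-chains terminating there, dyadically decompose both $g$ and the local richness of three-link extensions, and balance the two distributions with a Cauchy--Schwarz step controlled by the Szemer\'edi--Trotter moment estimates for the degrees $|\ell(R,\alpha_j)\cap E|$ together with the pencil bound above. This is the step inherited, with the circle bound swapped for the cleaner line bound, from the proof of Theorem~2 in \cite{FK}, and the logarithmic loss incurred in its dyadic summation is what produces the $n^\epsilon$ factor in the $k\equiv 1\Mod{3}$ case.
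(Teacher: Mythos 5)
Your proposal takes essentially the same route as the paper: the paper likewise transfers the proof of Frankl--Kupavskii's Theorem 2 (stated as Theorem \ref{FKthm}) to dot products, proving Lemma \ref{sameRadial} so that the hypothesis that no two points of $E$ share a radial line guarantees that two constraint lines $\ell_\alpha(P)$ and $\ell_\beta(Q)$ meet in at most one point (the role played for distances by two circles meeting in at most two points), and handling $k \equiv 1 \Mod{3}$ by replacing the unit-distance quantity $u_2(n)$ with the Szemer\'edi--Trotter bound $n^{4/3}$ of Lemma \ref{singleDP}, exactly as in your seed-plus-extension scheme. One gloss to correct: the clean exponents for $k \equiv 0, 2 \Mod{3}$ are not obtained by absorbing dyadic logarithmic losses into ``polynomial slack''---those targets are exact powers of $n$ with no slack at all---rather, in \cite{FK} those two residues are handled with no loss, and the $n^\epsilon$ appears only in their $k \equiv 1 \Mod{3}$ bound, which is precisely where the paper (and your argument) inherits it.
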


Next, we follow \cite{BS} in offering $k$-chain bounds for sets of points where no line has many points.

\begin{corollary}\label{lightLines}
Given a large, finite set $E$ of $n$ points in $\mathbb R^2$, with no more than $t$ points on any line and a natural number $k\lesssim 1$, the maximum number of $k$-chains of the type $(\alpha_1,\dots, \alpha_k)$ that can exist in $E$, is 
$$\lesssim \begin{cases}
\left(\log_2 n \right)^\frac{2k-6}{3} t^\frac{k-3}{3}n^{\frac{4k+12}{9}} \qquad& \text{ if } k \equiv 0 \Mod 3,\\
\left(\log_2 n \right)^\frac{2k-2}{3} t^\frac{k-1}{3}n^{\frac{4k+8}{9}} & \text{ if } k \equiv 1 \Mod 3,\\
\left(\log_2 n \right)^\frac{2k+2}{3} t^\frac{k+1}{3}n^{\frac{4k+4}{9}} & \text{ if } k\equiv 2 \Mod 3.
\end{cases}$$
\end{corollary}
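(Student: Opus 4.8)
The plan is to generalize the light-line hinge estimate of \cite{BS} to arbitrary $k$ by processing the chain three dot products at a time. The basic reformulation is standard: for $P \in E$ and nonzero $\alpha$, the locus $\{x : P \cdot x = \alpha\}$ is a line $\ell_\alpha(P)$, so the relation $R_j \cdot R_{j+1} = \alpha_j$ says that $R_{j+1}$ lies on $\ell_{\alpha_j}(R_j)$, and a $k$-chain is precisely a walk in the incidence structure between the $n$ points of $E$ and the $n$ lines $\{\ell_{\alpha_j}(P) : P \in E\}$. Two inputs drive the argument: by Szemer\'edi--Trotter the number of pairs $(P,Q) \in E^2$ with $P \cdot Q = \alpha$ is $\lesssim n^{4/3}$ (the single dot-product bound), and the light-line hypothesis guarantees that once $R_j$ is fixed the point $R_{j+1}$ has at most $t$ admissible positions, since it must lie on $\ell_{\alpha_j}(R_j) \cap E$.

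I would then set up a recursion for the number $C_k$ of $k$-chains. Writing $T_j(P)$ for the number of partial $j$-chains ending at $P$, appending one dot product is governed by $C_{j+1} = \sum_P T_j(P)\, d_{j+1}(P)$, where $d_{j+1}(P) = |\ell_{\alpha_{j+1}}(P) \cap E| \le t$. There are two ways to spend an edge: a light-line step uses $d_{j+1}(P) \le t$ directly and costs a factor $t$, whereas an incidence step counts the newly created pairs through Szemer\'edi--Trotter and costs a factor $n^{4/3}$, but to reconcile the incidence bound with the uneven multiplicities $T_j(P)$ one must dyadically decompose both those multiplicities and the richness of the lines $\ell_{\alpha_{j+1}}(P)$, which is what injects the logarithmic factors. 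The core block estimate is that advancing three consecutive dot products --- one incidence step, one light-line step, and one edge whose constraint is simply discarded (legitimate for an upper bound) --- multiplies the count by at most $\lesssim (\log_2 n)^2\, t\, n^{4/3}$, that is, $C_k \lesssim (\log_2 n)^2\, t\, n^{4/3}\, C_{k-3}$.

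The three residues of $k$ modulo $3$ are handled by three base cases. For $k \equiv 1$ the base is $C_1 \lesssim n^{4/3}$, a single Szemer\'edi--Trotter application; for $k \equiv 2$ it is the hinge bound $C_2 \lesssim (\log_2 n)^2 t\, n^{4/3}$ from \cite{BS}; and for $k \equiv 0$ it is $C_3 \lesssim n^{8/3}$, obtained by discarding the middle constraint and applying Szemer\'edi--Trotter to the two disjoint end pairs independently. Iterating the block estimate down to the appropriate base case and collecting exponents produces $\lceil (k+1)/3 \rceil$ incidence steps, hence the factor $n^{\frac{4}{3}\lceil (k+1)/3 \rceil}$, together with $k+1 - 2\lceil (k+1)/3 \rceil$ light-line steps, hence the matching powers of $t$ and of $(\log_2 n)^2$; evaluating $\lceil (k+1)/3 \rceil$ in each residue class then reproduces the three displayed bounds exactly.

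The main obstacle is the block estimate, and specifically the interface between Szemer\'edi--Trotter and the running chain multiplicities: to apply the incidence bound one must sort the accumulated partial chains by how many terminate at each point, estimate on each dyadic level, and recombine without losing more than a bounded power of $\log_2 n$, all while invoking the light-line hypothesis only where a genuine dot-product line of at most $t$ points is present. Secondary care is needed to ensure that each point of the chain is provided exactly once, so that the discarded constraints only inflate the count, and to treat the boundary block --- whose shape depends on $k \bmod 3$ --- which is the true source of the three separate cases.
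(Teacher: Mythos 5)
Your proposal is correct and is essentially the paper's own argument: decompose the chain into blocks of three dot products (two constraints kept, the connecting one discarded), bound each block by the light-line hinge estimate of \cite{BS} together with the Szemer\'edi--Trotter single dot-product bound $n^{4/3}$, and pick the base case ($C_1\lesssim n^{4/3}$, $C_2\lesssim (\log_2 n)^2\, t\, n^{4/3}$, or $C_3\lesssim n^{8/3}$) according to $k\bmod 3$; your recursion $C_k\lesssim (\log_2 n)^2\, t\, n^{4/3}\, C_{k-3}$ unrolls to exactly the paper's product of block estimates, and your exponent bookkeeping reproduces all three displayed bounds. The only inessential difference is that once the connecting constraint is discarded the blocks decouple completely, so the dyadic decomposition you describe for reconciling Szemer\'edi--Trotter with the multiplicities $T_j(P)$ is not actually needed: the block factor follows at once from the quoted hinge bound of \cite{BS} (indeed, the cruder count of $\lesssim n^{4/3}$ pairs times at most $t$ extensions would do), and the logarithms enter only through that quoted estimate.
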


We now introduce {\it $s$-adaptability}, which is used to quantify how well-distributed a set of points is. This property has been used to study many types of geometric point configuration problems. Using $s$-adaptability, results about discrete point sets can be partially translated to apply to sets with positive Hausdorff dimension. These latter results have consequences in geometric measure theory. See \cite{BIT, IJL, IMS, IRU, IS2}, for example.
A large, finite point set $E \subset [0,1]^2$, is said to be {\it $s$-adaptable} if the following two conditions hold:
\begin{align*}
\text{(energy)} \qquad &\frac{1}{{n \choose 2}}\sum_{\substack{P, Q \in E\\P\neq Q}} |P-Q|^{-s} \lesssim 1,\\
\text{(separation)} \qquad &\min\{|P-Q| : P,Q\in E,~ P\neq Q \}\geq n^{-\frac{1}{s}}.\\
\end{align*}
The separation condition from the definition of $s$-adaptability guarantees that there will be no more than $\lesssim n^{\frac{1}{s}}$ points on a line, so we can appeal to Corollary \ref{lightLines} and get the next result as a corollary.
\begin{corollary}\label{sAdapt}
Let $E \subset [0,1]^2$ be a set of $n$ points that is $s$-adaptable. For $s\leq 2$ and a natural number $k\lesssim 1$, the maximum number of $k$-chains of the type $(\alpha_1,\dots, \alpha_k)$ that can exist, is $$\lesssim \begin{cases}
\left(\log_2 n \right)^\frac{2k-6}{3} n^{\frac{4k+12}{9}+\frac{k-3}{3s}} \qquad& \text{ if } k \equiv 0 \Mod 3,\\
\left(\log_2 n \right)^\frac{2k-2}{3} n^{\frac{4k+8}{9}+\frac{k-1}{3s}} & \text{ if } k \equiv 1 \Mod 3,\\
\left(\log_2 n \right)^\frac{2k+2}{3} n^{\frac{4k+4}{9}+\frac{k+1}{3s}} & \text{ if } k\equiv 2 \Mod 3.
\end{cases}$$
\end{corollary}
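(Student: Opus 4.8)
The plan is to derive this directly from Corollary \ref{lightLines}, by showing that $s$-adaptability forces a light-line hypothesis with $t \approx n^{1/s}$, and then substituting that value of $t$ into the three-case bound. Since the statement is flagged as a corollary of the light-lines result, essentially all of the content lies in converting the separation condition into a bound on the number of collinear points.

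First I would establish the promised bound on points per line. Fix any line $\ell$, and let $m$ be the number of points of $E$ on $\ell$. Because $E \subset [0,1]^2$, these $m$ points all lie inside the segment $\ell \cap [0,1]^2$, whose length is at most $\sqrt{2}$ (the diameter of the unit square). By the separation condition in the definition of $s$-adaptability, any two of them are at distance at least $n^{-1/s}$, so the sub-segment they span has length at least $(m-1)\,n^{-1/s}$. Comparing with the bound $\sqrt{2}$ yields $m \leq \sqrt{2}\,n^{1/s}+1 \lesssim n^{1/s}$. Hence $E$ meets the hypotheses of Corollary \ref{lightLines} with $t \lesssim n^{1/s}$.

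Next I would substitute $t \approx n^{1/s}$ into each of the three cases of Corollary \ref{lightLines}. In every case the exponent of $t$ — namely $\tfrac{k-3}{3}$, $\tfrac{k-1}{3}$, or $\tfrac{k+1}{3}$ — is nonnegative for the relevant residue class (using $k\geq 3$, $k\geq 1$, and $k\geq 2$ respectively), so the bound is monotone increasing in $t$ and we may replace $t$ by its maximal admissible value $C\,n^{1/s}$, absorbing $C$ into the implied constant. For instance, when $k\equiv 0 \pmod 3$ the factor $t^{(k-3)/3}$ becomes $n^{(k-3)/(3s)}$, which multiplies $n^{(4k+12)/9}$ to give $n^{(4k+12)/9+(k-3)/(3s)}$, matching the stated exponent; the logarithmic factor is unchanged and the remaining two residue classes are handled identically.

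The only real work is the packing estimate together with the bookkeeping of exponents, so I expect the main (and rather minor) subtlety to be confirming that the exponent of $t$ is nonnegative, which is what legitimizes substituting the extremal value of $t$ to obtain an upper rather than lower bound. The hypothesis $s\leq 2$ does not enter the inequality itself; it is precisely the range in which an $n$-point $s$-adaptable set can exist in $[0,1]^2$, since packing $n$ points that are pairwise $n^{-1/s}$-separated into the unit square requires $n^{2/s}\gtrsim n$.
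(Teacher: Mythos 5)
Your proposal is correct and takes essentially the same route as the paper: the paper derives this result exactly by noting that the separation condition forces $\lesssim n^{1/s}$ points on any line and then invoking Corollary \ref{lightLines} with $t \approx n^{1/s}$. Your packing argument for the collinear-point bound and your check that the exponent of $t$ is nonnegative simply make explicit the details the paper leaves unstated.
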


We pause to note that no large, finite set of $n$ points in $[0,1]^2$ can be $s$-adaptable for $s>2,$ without violating the separation condition. To see this, partition the unit square into a $\sqrt n \times \sqrt n$ grid of squares of side-length $n^{-\frac{1}{2}}.$ Notice that there are $n$ such squares, and if points are to be separated by a distance much greater than $n^{-\frac{1}{2}}$, many of the small squares must be empty, but then there cannot be $n$ points in total. Moreover, when a set is not $s$-adaptable for any $s\geq \frac{3}{2},$ Corollary \ref{sAdapt} is outperformed by Theorem \ref{2chainzgeneral}. So the effective range for Corollary \ref{sAdapt} is $\frac{3}{2}\leq s \leq 2.$

%(rephrase and verify) Notice that if $\epsilon$ is chosen to be too small, Theorem \ref{sAdapt} is outdone by Theorem \ref{2chainzgeneral}. Similarly, if $\epsilon$ is close to 1, there cannot be many points in the unit square. Keeping this in mind, the range in which Theorem \ref{separation} is most useful is $n^{-\frac{2}{3}}<\epsilon\leq n^{-\frac{1}{2}}$.

\subsection{Higher dimensions}

Thus far, we have only considered point sets in the plane. Much like in Remark \ref{2Lenz}, there are point sets in higher dimensions that have many dot product chains of a given type. The difference here is that in higher dimensions, we can construct arbitrarily long dot product $k$-chains with $n^{k+1}$ points that have nonzero dot products, as opposed to in two dimensions, where we could only do that for zero dot products.

\begin{remark}\label{3Lenz}
Given a natural number $k \lesssim 1$ and any type of dot product $k$-chain, $(\alpha_1,\dots, \alpha_k),$ we can arrange about $n/(k+1)$ points along each of the following lines:
$$\{(x,y,z): x=1, y=0\},$$
$$\{(x,y,z): x=\alpha_1, z=0\},$$
$$\{(x,y,z): x=\alpha_2/\alpha_1, y=0\},$$
$$\{(x,y,z): x=\alpha_1\alpha_3/\alpha_2, z=0\}, \dots$$
and so on, so that the dot product of a point from the $j$th line and a point from the $(j+1)$th line is $\alpha_j,$ and alternating the free variable between $y$ and $z$ each time. This gives us a set of $n$ points with a total of about $n^{k+1}$ dot product $k$-chains of the specified type.
\end{remark}

Both Remark \ref{2Lenz} and Remark \ref{3Lenz} were inspired by the celebrated Lenz construction in $\mathbb R^4$ of $n/2$ points on a unit circle in the first two dimensions and $n/2$ points on a circle in the next two dimensions, which will give about $n^2$ point pairs (one point from each circle) separated by a distance of $\sqrt 2.$ These constructions make it clear that the study of chains in higher dimensions is trivial unless there are extra restrictions put on the point sets under consideration. For this reason, we offer the next results on point sets where we have some extra conditions on how many points can be on hyperplanes.

\begin{corollary}\label{hiDim}
Given any natural number $k \lesssim 1,$ and any large, finite set $E$ of $n$ points in $\mathbb R^d$, with no more than $r$ points on any $(d-1)$-hyperplane, and no more than $t$ points on any $(d-2)$-hyperplane, and any $\epsilon>0,$ the number of occurrences of a dot product $k$-chain of type $(\alpha_1,\dots, \alpha_k)$ is bounded above by
$$\lesssim \begin{cases}
n^\frac{k+3}{3}r^\frac{k-3}{3}t^2+n^{\frac{(4d-3)(k-1)+18d-8}{6d-3}+\epsilon}r^\frac{k-3}{3}t^\frac{2d-2}{2d-1} \qquad& \text{ if } k \equiv 0 \Mod 3,\\
 n^\frac{k+2}{3}r^\frac{k-1}{3}t+n^{\frac{(4d-3)(k-1)+9d-6}{6d-3}+\epsilon}r^\frac{k-1}{3}t^\frac{d-1}{2d-1} & \text{ if } k \equiv 1 \Mod 3,\\
n^\frac{k+1}{3}t^\frac{2k+2}{3}+n^{\frac{(4d-3)(k+1)}{6d-3}+\epsilon}t^{\frac{(2d-2)(k+1)}{6d-3}+\epsilon}+n^\frac{k+1}{3}r^\frac{k+1}{3} & \text{ if } k\equiv 2 \Mod 3.
\end{cases}$$
\end{corollary}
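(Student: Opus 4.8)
The plan is to lift the planar argument behind Corollary \ref{lightLines} to $\mathbb{R}^d$, replacing Szemerédi--Trotter with a point--hyperplane incidence bound. The reduction is the same duality used throughout: because every $\alpha_j$ is nonzero, for a fixed point $P$ the set $\{X \in \mathbb{R}^d : X\cdot P = \alpha_j\}$ is an affine hyperplane avoiding the origin, so a $k$-chain $(R_1,\dots,R_{k+1})$ is precisely a walk in which $R_{j+1}$ is incident to the hyperplane dual to $R_j$. Under this dictionary the hypothesis ``at most $r$ points on a $(d-1)$-hyperplane'' says no single dot-product equation $X\cdot P = \alpha_j$ has more than $r$ solutions in $E$, while ``at most $t$ points on a $(d-2)$-hyperplane'' says two such equations share at most $t$ solutions. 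These are exactly the non-degeneracy conditions that forbid the Lenz-type concentrations of Remark \ref{3Lenz} and make a quantitative incidence bound available.

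First I would fix the incidence engine. I would invoke a point--hyperplane incidence estimate in $\mathbb{R}^d$ (of Apfelbaum--Sharir / Fox--Pach--Sheffer--Suk--Zahl type): for $m$ points and $N$ hyperplanes whose incidence graph contains no $K_{2,t+1}$, the number of incidences is roughly of the form $\lesssim (mN)^{\frac{2d-2}{2d-1}+\epsilon}\,t^{\frac{1}{2d-1}} + \text{(lower order)}$, to be used alongside the trivial count $\lesssim rN$ coming from the $(d-1)$-hyperplane hypothesis. Note this reduces to Szemerédi--Trotter at $d=2$. Running the two competing estimates---the incidence bound versus the trivial $r$-bound---against each other is what will ultimately produce the several summands appearing in each case of the statement.

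Next I would organize the chain count as a product over blocks. Writing $k = 3q + \rho$ with $\rho\in\{0,1,2\}$, I would split the chain into $q$ overlapping triples of consecutive points and dyadically decompose $E$ according to how rich each point's dual hyperplane is, i.e.\ its number of chain-neighbors. On each dyadic piece the $3$-chains inside a block are controlled by the incidence estimate above, and adjacent blocks are glued along their shared joint vertex, where the degree is paid for by either $r$ or $t$. Summing over the dyadic ranges gives the logarithmic savings that I would absorb into the $\epsilon$, and multiplying the per-block counts reproduces the main exponents $\frac{(4d-3)(k\mp 1)}{6d-3}$. The residue $\rho$ dictates how the one or two leftover edges attach to the final block, and this is the origin of the three separate cases, as well as of the different powers of $r$ and $t$ in each.

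The crux, and the step I expect to fight with, is the bookkeeping at the joints. When the per-block incidence bound is applied one must charge the degree of each shared vertex exactly once, so that the product of block estimates is a genuine upper bound and not an overcount; and at each joint one must decide whether paying the $r$-bound (a full hyperplane) or the $t$-bound (a $(d-2)$-flat arising as the intersection of two dual hyperplanes) is cheaper. Choosing these optimally, and checking that the non-degeneracy hypotheses of the incidence theorem survive restriction to a dyadic piece, is where the genuine work lies; once the right combination is fixed, the exponent arithmetic that yields the three displayed cases is routine.
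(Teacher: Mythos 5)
There is a genuine gap, and it is exactly the step you flag as "the crux." The paper never has to do any bookkeeping at joints, because it does not use overlapping triples: following the template of Theorem \ref{2chainzgeneral}, it partitions the $(k+1)$-tuple into \emph{disjoint} consecutive triples (plus one or two leftover pairs, depending on $k$ modulo $3$), \emph{discards} the dot-product constraints that straddle two blocks, and then simply multiplies black-box bounds for each block: Lund's hinge bound in $\mathbb R^d$ (Theorem \ref{lundHinges}) for each triple, and the Lund--Sheffer--de Zeeuw point--hyperplane incidence bound (Theorem \ref{dUDP}, applied with $m=n$ dual hyperplanes) for each leftover pair. Since the blocks share no points, the product of the block counts is trivially an upper bound, and the three displayed cases are literally the expansions of $(\text{hinge bound})^{j}$, $(\text{hinge bound})^{j}\cdot(\text{incidence bound})$, and $(\text{hinge bound})^{j-1}\cdot(\text{incidence bound})^2$. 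Your overlapping-triple decomposition forces you to cover all $k$ constraints with roughly $k/2$ blocks rather than $(k+1)/3$, so the naive product over your blocks gives strictly worse exponents; recovering the stated exponents would require actually carrying out the degree-weighted gluing and dyadic analysis you describe (Frankl--Kupavskii-style machinery, which in this paper is only invoked for the planar Corollary \ref{starlike}), and that work is precisely what your proposal leaves undone.

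Two further concrete issues. First, you are missing the key intermediate result: Lund's hinge theorem is what packages the incidence estimate into a per-triple bound of the form $nt^2+n^{\frac{4d-3}{2d-1}+\epsilon}t^{\frac{2d-2}{2d-1}+\epsilon}+nr$, whose powers are visibly the terms in the corollary; your plan to re-derive per-block bounds from a raw incidence inequality duplicates that (nontrivial, published) work inside the proof. Second, the incidence bound you quote, with main term $(mN)^{\frac{2d-2}{2d-1}+\epsilon}t^{\frac{1}{2d-1}}$, is not the right shape: the exponents in the statement are tied to the LSdZ form $mt+m^{\frac{2(d-1)}{2d-1}+\epsilon}n^{\frac{d}{2d-1}}t^{\frac{d-1}{2d-1}}+n$, and with $m=N=n$ your version has a larger $n$-exponent for every $d>2$, so even granting the gluing step it would not reproduce the claimed bounds. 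Your dictionary between the $r,t$ hypotheses and solution sets of the dot-product equations is correct, and the modulo-$3$ case split is the right skeleton; the fix is to abandon the overlapping blocks, cite Theorems \ref{lundHinges} and \ref{dUDP} directly, and multiply.
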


\subsection{Organization of this paper}
We will begin by recalling some of the elementary notions used herein in Section \ref{prelim}. Section \ref{proofs} follows, beginning with the most simple arguments. Since many of the arguments have the same basic structure, we omit details in the later proofs.

\section{Preliminaries}\label{prelim}

To keep track of dot products, we introduce some geometric tools.

\subsection{The $\alpha$-line for A}

Let $A$ be a point in $\arr^{2}$ with coordinates $(a_1,a_2)$, and let $\alpha \neq 0$ be a real number. A point $B$ in $\arr^{2}$ satisfies $A \cdot B = \alpha$, if and only if it lies on the line having the equation
$$a_{1}{x}+ a_{2}y=\alpha.$$
This line is called \textit{the $\alpha$-line for A,} denoted $\ell_\alpha(A).$ Note that $B$ is on the $\alpha$-line of $A$, if and only if $A$ is on the $\alpha$-line of $B$. We also call the unique line through a point $A$ and the origin the {\it radial line} of $A,$ and note that it is perpendicular to $\ell_\alpha(A)$ for any $\alpha.$\\

\begin{minipage}{5in}
%\centering
\includegraphics[scale=1]{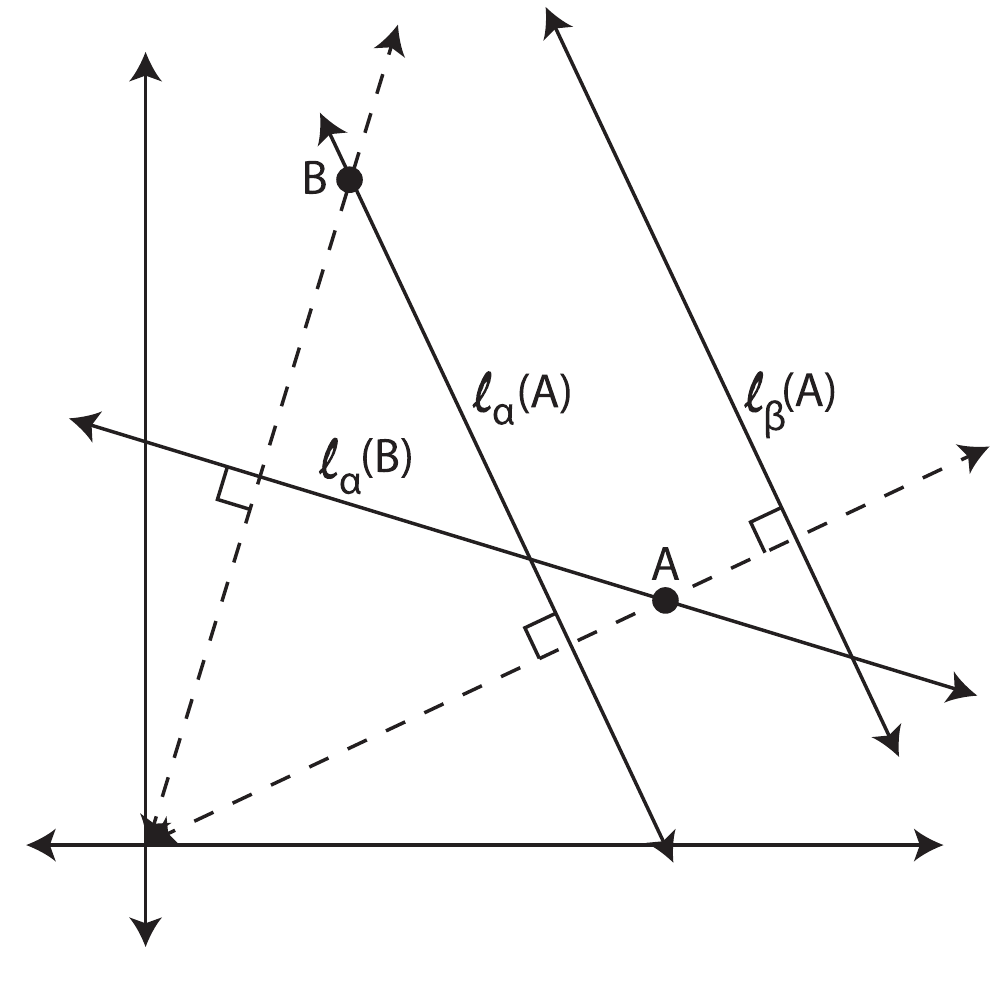}\label{dpChainsFig1}\\
{\bf Figure 1:} Here we have the points $A$ and $B$, neither of which are the origin, and two positive numbers $\alpha$ and $\beta.$ The dashed lines are the radial lines of $A$ and $B$, and their $\alpha$-lines are perpendicular to their respective radial lines. The points $A$ and $B$ have a dot product of $\alpha,$ so $A$ is on $\ell_\alpha(B),$ and $B$ is on $\ell_\alpha(A).$ Notice that for $\beta \neq \alpha$, the points that have dot product $\beta$ with the point $A$ are on another line, $\ell_\beta(A)$ that is parallel to $\ell_\alpha(A).$\\\\
\end{minipage}

\subsection{Basic tools}

The following lemma appears in \cite{BS} and will be fundamental to our results. We include the short proof as it shows the flavor of the arguments to come.

\begin{lemma}\label{singlePoint}
If $A$ and $C$ are two points in $\mathbb{R}^2$ that do not lie on the same radial line, and $\alpha, \beta \in \mathbb{R}\setminus\{0\},$ then there exists exactly one point $B \in\mathbb{R}^2$ such that
$$B \cdot A = \alpha \text{  and  } B \cdot C = \beta $$
\end{lemma}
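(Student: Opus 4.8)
The plan is to reduce the claim to an elementary statement about the intersection of two lines. Writing $A=(a_1,a_2)$, $C=(c_1,c_2)$, and the sought point $B=(x,y)$, the two conditions $B\cdot A=\alpha$ and $B\cdot C=\beta$ translate directly into the linear system
$$a_1 x + a_2 y = \alpha, \qquad c_1 x + c_2 y = \beta.$$
By the description of $\alpha$-lines in the preliminaries, this system simply asks for the point $B$ lying simultaneously on $\ell_\alpha(A)$ and on $\ell_\beta(C)$, so proving the lemma amounts to showing these two lines meet in exactly one point.

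I would then invoke the standard fact that the system has a unique solution precisely when its coefficient matrix $\begin{pmatrix} a_1 & a_2 \\ c_1 & c_2 \end{pmatrix}$ has nonzero determinant $a_1 c_2 - a_2 c_1$, in which case Cramer's rule exhibits the unique $B$ explicitly. The only place the hypothesis enters is in verifying that this determinant does not vanish. Here I would use that $a_1 c_2 - a_2 c_1 = 0$ exactly when $A$ and $C$ are scalar multiples of one another --- noting that neither point is the origin, since radial lines are defined only for nonzero points --- which is precisely the condition that $A$ and $C$ share a radial line. Since we have assumed they do not, the determinant is nonzero and the unique point $B$ exists. Geometrically, this is just the statement that the normal vectors $A$ and $C$ of the two lines are non-parallel, so the lines $\ell_\alpha(A)$ and $\ell_\beta(C)$ are themselves non-parallel and hence cross in exactly one point.

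I expect no genuine obstacle, since the whole argument collapses to the invertibility of a $2\times 2$ matrix. The only point requiring a little care is cleanly matching the geometric hypothesis of distinct radial lines to the algebraic nondegeneracy condition, together with the implicit observation that both $A$ and $C$ must be nonzero for the hypothesis to make sense.
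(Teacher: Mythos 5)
Your proposal is correct and is essentially the paper's own argument: both reduce the claim to showing that $\ell_\alpha(A)$ and $\ell_\beta(C)$ are non-parallel lines (hence meet in exactly one point), with the hypothesis of distinct radial lines supplying the non-parallelism. Your determinant/Cramer's rule computation is just the coordinate form of the paper's geometric observation that these lines are perpendicular to two distinct radial lines.
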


\begin{proof} Let $\ell_\alpha(A)$ be the alpha line for $A$ and let $\ell_\beta(C)$ be the beta line for $C.$ Since $A$ and $C$ do not lie on the same radial line and $\ell_\alpha(A)$ and $\ell_\beta(C)$ are perpendicular to the radial lines of $A$ and $C$, respectively, they are not parallel to each other and hence intersect in exactly one point. Therefore, there exists exactly one point $B \in \mathbb R^2$ such that $B \cdot A= \alpha \text { and }  B \cdot C=\beta.$
\end{proof}

The celebrated Szemer\'edi-Trotter Theorem from \cite{ST83} will also come into play in what follows. 

\begin{theorem}\label{ST}
Given $n$ points and $m$ lines in the plane, the number of point-line pairs, such that the point lies on the line is
$$\lesssim\left(n^{\frac{2}{3}}m^{\frac{2}{3}}+n+m\right).$$
\end{theorem}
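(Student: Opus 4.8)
The plan is to follow Székely's graph-theoretic argument, which extracts the bound from the crossing number inequality. First I would discard every line containing fewer than two of the $n$ points, since each such line contributes at most one incidence and there are at most $m$ of them, which accounts for a term of size $\lesssim m$ at the end. On the surviving lines I would build a drawn graph $G$ whose vertices are the $n$ points: for each line $\ell$ carrying $k_\ell \geq 2$ of the points, I draw the $k_\ell - 1$ segments joining consecutive points along $\ell$. The number of edges is then $e = \sum_\ell (k_\ell - 1) = I - m'$, where $I$ is the total number of incidences and $m'$ is the number of lines carrying at least two points; in particular $e \geq I - m$.

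Next I would bound the crossing number of $G$ from above. Two edges of $G$ can cross transversally only if they lie on distinct lines, and any two distinct lines meet in at most one point, so the straight-line drawing exhibits at most $\binom{m}{2} \leq m^2$ crossings, whence $\mathrm{cr}(G) \leq m^2$. Against this I would set the crossing number inequality: any simple graph with $n$ vertices and $e \geq 4n$ edges satisfies $\mathrm{cr}(G) \geq e^3/(64\,n^2)$. Combining the two estimates gives $e^3/(64\,n^2) \leq m^2$, hence $e \lesssim n^{2/3} m^{2/3}$, and therefore $I \leq e + m \lesssim n^{2/3} m^{2/3} + m$.

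Finally I would dispose of the regime where the hypothesis $e \geq 4n$ of the crossing lemma fails. If $e < 4n$, then $I = e + m' < 4n + m$, which is already absorbed by the $n + m$ terms, so in every case $I \lesssim n^{2/3} m^{2/3} + n + m$, as claimed.

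The main obstacle is the crossing number inequality itself, which is the real engine of the argument. I would establish it in two steps. Euler's formula shows that a simple planar graph on $n \geq 3$ vertices has at most $3n - 6$ edges, so any drawn graph satisfies $\mathrm{cr}(G) \geq e - 3n$; then a random-sampling amplification—retaining each vertex independently with probability $p = 4n/e$, applying the linear bound to the random subgraph, and taking expectations—upgrades this to the cubic bound $\mathrm{cr}(G) \gtrsim e^3/n^2$. Getting the probabilistic step and the bookkeeping of the discarded low-incidence lines exactly right is where the care is needed; the remaining ingredients are elementary incidence geometry.
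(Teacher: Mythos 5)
Your proposal is correct, but it should be said up front that the paper contains no proof of Theorem \ref{ST} at all: the result is imported as a black box with a citation to the original 1983 paper of Szemer\'edi and Trotter, whose argument proceeds by a cell-decomposition of the plane and is substantially more involved. What you give is Sz\'ekely's crossing-number proof, a genuinely different and more elementary route: the only nontrivial input is the crossing number inequality, which you correctly obtain from Euler's formula (giving $\mathrm{cr}(G)\geq e-3n$) amplified by random vertex sampling with $p=4n/e$, and the incidence bound then follows by playing the lower bound $e^3/(64n^2)\leq\mathrm{cr}(G)$ against the upper bound $\mathrm{cr}(G)\leq\binom{m}{2}$ coming from the fact that two distinct lines cross at most once. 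Your handling of the degenerate regimes (lines with at most one point, and the case $e<4n$ where the crossing lemma does not apply) is also sound. Two small points of bookkeeping, neither affecting correctness: the intermediate equality $e=\sum_\ell(k_\ell-1)=I-m'$ is not exact if $I$ counts all incidences, since incidences on the discarded lines are absent from the sum, but the inequality $e\geq I-m$ that you actually use does hold; and in the sampling step one should fix an optimal drawing of $G$ and use that adjacent edges do not cross in such a drawing, so that each crossing survives with probability exactly $p^4$, justifying $\mathbb{E}[\mathrm{cr}(G_p)]\leq p^4\,\mathrm{cr}(G)$. What your approach buys is self-containedness: the paper's treatment asks the reader to trust the literature, whereas your argument proves the theorem from first principles, at the modest cost of developing the crossing lemma.
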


\section{Proofs}\label{proofs}

We begin with the following simple lemma, using Szemer\'edi-Trotter to bound how often a particular dot product can occur in a set of points in the plane.
\begin{lemma}\label{singleDP}
Given a large, finite set $E$ of $n$ points in the plane, no $\alpha\neq 0$ can be a dot product determined by pairs of points from $E$ more than $n^\frac{4}{3}$ times.
\end{lemma}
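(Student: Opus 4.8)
The plan is to count incidences between the points of $E$ and a carefully chosen family of lines, then invoke the Szemer\'edi--Trotter Theorem (Theorem \ref{ST}). The key observation, coming from the $\alpha$-line machinery of Subsection 2.1, is that for a fixed nonzero $\alpha$, the number of ordered pairs $(A,B)\in E\times E$ with $A\cdot B=\alpha$ equals the number of incidences between the points of $E$ and the lines $\{\ell_\alpha(A):A\in E\}$. Indeed, $A\cdot B=\alpha$ holds precisely when $B$ lies on $\ell_\alpha(A)$, so each such pair contributes exactly one point-line incidence, with the point $B\in E$ and the line $\ell_\alpha(A)$ drawn from the family indexed by points of $E$.

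The first step is therefore to set $\mathcal{L}=\{\ell_\alpha(A):A\in E\}$ and note $|\mathcal{L}|\leq n$, with the point set still having $n$ points. The second step is to handle a subtlety: distinct points of $E$ could in principle determine the same $\alpha$-line. If two points $A,A'$ lie on the same radial line through the origin, their $\alpha$-lines are parallel (both perpendicular to that common radial line), but since $\alpha\neq 0$ they are at different distances from the origin unless $A=A'$, so in fact the $\ell_\alpha(A)$ are distinct for distinct $A$; thus $m=|\mathcal{L}|\approx n$ and no collapsing occurs. Even if one did not wish to verify this, collapsing only decreases the number of lines, which cannot increase the Szemer\'edi--Trotter bound, so the estimate is safe either way.

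The third step is the direct application of Theorem \ref{ST} with $m\leq n$ lines and $n$ points, yielding that the number of incidences is $\lesssim n^{2/3}n^{2/3}+n+n=n^{4/3}+2n\lesssim n^{4/3}$. Since the number of ordered pairs realizing the dot product $\alpha$ is exactly this incidence count, we conclude that $\alpha$ occurs at most $\lesssim n^{4/3}$ times, as claimed.

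I do not anticipate a serious obstacle here; the argument is a textbook incidence-counting application once the dictionary between dot products and point-line incidences via $\alpha$-lines is in place. The only point requiring care is the bookkeeping in the second step, confirming that the map $A\mapsto\ell_\alpha(A)$ does not create fewer lines than points in a way that would need separate treatment, but as noted this can only help the bound. I would also remark that the $n+m$ terms in Szemer\'edi--Trotter are dominated once $n$ is large, which is exactly the regime assumed.
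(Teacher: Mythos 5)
Your overall strategy is exactly the paper's: convert occurrences of the dot product $\alpha$ into incidences between $E$ and the family of $\alpha$-lines, verify that the map $A\mapsto\ell_\alpha(A)$ is injective, and apply Szemer\'edi--Trotter with $n$ points and $n$ lines. However, two of your supporting claims are flawed, and since each is offered as a substitute for the other, the injectivity step is not actually established as written. The more serious problem is the fallback assertion that ``collapsing only decreases the number of lines, which cannot increase the Szemer\'edi--Trotter bound, so the estimate is safe either way.'' This is false: the quantity you must bound is the number of \emph{pairs}, not the number of incidences, and the identification of pairs with incidences is precisely what collapsing destroys. If $w$ distinct points of $E$ all produced the same line $\ell$, then each point $B\in E\cap\ell$ would account for $w$ pairs but only one incidence, so the pair count would be the incidence count weighted by multiplicities. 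In the extreme, if all $n$ points mapped to a single line containing all of $E$, Szemer\'edi--Trotter would see $n$ incidences while there were $\approx n^2$ pairs. The injectivity verification is therefore not an optional bookkeeping nicety; it is the step that makes the dictionary valid, which is why the paper devotes most of its proof to it.

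Your primary injectivity argument also has a hole: you claim that distinct points $A\neq A'$ on the same radial line give $\alpha$-lines at different distances from the origin, but this fails when $A'=-A$, since the distances $|\alpha|/\|A\|$ and $|\alpha|/\|A'\|$ then coincide. The lines are still distinct in that case --- $\ell_\alpha(A)$ has equation $a_1x+a_2y=\alpha$ while $\ell_\alpha(-A)$ has equation $a_1x+a_2y=-\alpha$, and $\alpha\neq-\alpha$ because $\alpha\neq 0$ --- but they lie on opposite sides of the origin, which the distance comparison cannot detect. The paper's algebraic argument, which compares coefficients of the two line equations and derives $\lambda=1$, covers this case automatically; your argument needs the antipodal case patched separately. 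Finally, a small point: if the origin belongs to $E$, then its $\alpha$-line is not defined (its ``equation'' would read $0=\alpha$), so the origin should be discarded at the outset, as the paper does; this is harmless since the origin has dot product $0\neq\alpha$ with every point.
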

\begin{proof}
We can ignore the origin, as it has dot product zero with every point, so in what follows, we will assume that the origin is not in $E$. For each point $R\in E,$ draw the corresponding $\alpha$-line, $\ell_\alpha(R).$ So for each $R=(r_1,r_2)\in E,$ we have that $\ell_\alpha(R)$ can be written in the form $r_1x+r_2y=\alpha.$ We now show that these are unique.
\newpage

Consider any two distinct points $A, C\in E$. In the case that $A$ and $C$ do not lie on the same radial line, Lemma \ref{singlePoint} shows that there exists exactly one point, $B$, such that $A\cdot B = \alpha = B \cdot C.$ This means that $\ell_\alpha(A)\cap\ell_\alpha(C)$ is a single point, so they must be distinct lines.

Now suppose that $A=(a_1,a_2)$ and $C=(c_1,c_2)$ lie on the same radial line, and that it is not the $y$-axis. Then $a_1$ and $c_1$ are both nonzero, and there exists a nonzero $\lambda \in \mathbb R$ such that
$$c_1=\lambda a_1, \text{ and } c_2 = \lambda a_2.$$
By way of contradiction, suppose that $\ell_\alpha(A)=\ell_\alpha(C).$ Then we would have
$$a_1x+a_2y=\alpha=c_1x+c_2y=\lambda a_1x+\lambda a_2y,$$
for all $x\in \mathbb R.$ This implies that $a_1=\lambda a_1,$ which in turn tells us that $\lambda=1.$ But this means that $A$ and $C$ somehow share the same first coordinate and lie on the same radial line. The only radial line on which distinct points share the first coordinate is the $y$-axis, but we have assumed that $A$ and $C$ do not lie on the $y$-axis, so in this case $\ell_\alpha(A)$ cannot completely coincide with $\ell_\alpha(C)$.

In the case that $A$ and $C$ both lie on the $y$-axis, we would have that $a_1=c_1=0$, and the definition of $\ell_\alpha(A)$ would tell us $0\cdot x + a_2y=\alpha$, or that we can write $\ell_\alpha(A)$ as $y=(\alpha/a_2).$ Arguing similarly for $C$, we would see that $\ell_\alpha(C)$ could be written as $y=(\alpha/c_2)$. Putting these together yields
$$\frac{\alpha}{a_2} = y = \frac{\alpha}{c_2}.$$
But this means that $c_2 = a_2.$ This means that both points have coordinates $(0,a_2),$ meaning that they are the same point, which contradicts the fact that they are distinct. So again, $\ell_\alpha(A)$ cannot completely coincide with $\ell_\alpha(C)$.

Finally, we apply Szemer\'edi-Trotter. Since each of the $n$ points is associated to a unique line, we have a set of $n$ lines. Notice that by definition, if a point $P$ lies on a line $\ell_\alpha(Q),$ for some point $Q$, then $P\cdot Q=\alpha.$ That is to say, the number of point pairs for $E$ that determine the dot product $\alpha$ is precisely the number of incidences of points from $E$ and lines of the form $\ell_\alpha(Q),$ for some $Q\in E.$ Since there are $n$ points and $n$ lines, Theorem \ref{ST} tells us that the number of point-line incidences, and therefore occurrences of the dot product $\alpha$, is $\lesssim (n\cdot n)^\frac{2}{3}+n+n \approx n^\frac{4}{3},$ as claimed.
\end{proof}
\subsection{Proof of Theorem \ref{2chainzgeneral}}

\begin{proof} Let $E \subset \mathbb R^2$ be any set of $n$ points, and consider a given $k$-chain type $(\alpha_1, \dots, \alpha_k)$. Our aim is to bound the number of $(k+1)$-tuples of points in $E$ that determine these dot products pairwise, suppressing dependence on the $k$-tuples $(\alpha_1, \dots, \alpha_k)$, that is, the size of $\mathcal C_k(E), $ where
$$\mathcal C_k(E):= \{(R_1, \dots, R_{k+1})\in E^{k+1}:R_i\cdot R_{i+1}=\alpha_i, i=1,\dots, k\}.$$
We break into cases depending on the congruence class of $k$ modulo 3. From there, each case is broken up into component pieces which we estimate with repeated applications of Theorem \ref{hinges} and Lemma \ref{singleDP}.

\subsubsection{Case 1: $k\equiv 2 \Mod 3$} First off, if $k=2$, then we have the desired bound by direct appeal to Theorem \ref{hinges}. If $k>2$, then we just apply it repeatedly. Specifically, there exists a nonnegative integer $j$ such that $k=3(j-1)+2.$ Since $k+1=(3(j-1)+2)+1=3j,$ we are dealing with $(3j)$-tuples of points. We will break each $(3j)$-tuple down into $j$ consecutive triples. Start with $(R_1, R_2, R_3).$ By Theorem \ref{hinges}, we know that there can be no more than $n^2$ triples of the form $(R_1, R_2, R_3)$ with $R_1\cdot R_2 = \alpha_1$ and $R_2\cdot R_3 =\alpha_2.$ We will ignore any possible relationship between $R_3$ and $R_4$. Appealing to Theorem \ref{hinges} again, there can be no more than $n^2$ triples of the form $(R_4, R_5, R_6)$ with $R_4\cdot R_5 = \alpha_4$ and $R_5\cdot R_6 =\alpha_5.$ We continue in this fashion to bound the number of triples of each subsequent type. In so doing, we get a bound on the set
$$\mathcal C_{k,2}(E)=\{(R_1, \dots, R_{k+1})\in E^{k+1}:R_i\cdot R_{i+1}=\alpha_i, i=1,2,4,5, \dots, k,~ 3\hskip-1ex\not| i \}.$$
Namely, we can see that $|\mathcal C_{k,2}(E)|\lesssim (n^2)^{j},$ because for each of the $j$ triples, we get a bound of $n^2.$ Notice that $\mathcal C_k(E)\subset \mathcal C_{k,2}(E)$, so we get $|\mathcal C_k(E)|\leq |\mathcal C_{k,2}(E)|\lesssim (n^2)^{j}=n^\frac{2(k+1)}{3}.$

\subsubsection{Case 2: $k\equiv 1 \Mod 3$} In this case, there exists a nonnegative integer $j$ such that $k= 3j+1.$ So we can write $k+1 = 3j + 2.$ This means that we are looking at $j$ triples of points, followed by a pair of points. We can handle the $j$ triples by Case 1, and the final pair of points separately. That is to say, since $k-2\equiv 2 \Mod 3,$ the number of $(k-2)$-chains will be bounded above by
$$|\mathcal C_{k-2}(E)|\lesssim \left(n^2\right)^j=n^\frac{2(k-1)}{3}.$$
Now, we will ignore the relationship between $R_{k-1}$ and $R_k,$ and use Lemma \ref{singleDP} to bound the number of point pairs $(R_k, R_{k+1})$ such that $R_k\cdot R_{k+1}=\alpha_k$ by $n^\frac{4}{3}.$ That is to say,
$$|\mathcal C_1(E)|\lesssim n^\frac{4}{3}.$$
Putting these together, we get
$$|\mathcal C_k(E)|\leq |\mathcal C_{k-2}(E)|\cdot|\mathcal C_1(E)|\lesssim n^\frac{2(k-1)}{3}\cdot n^\frac{4}{3} = n^\frac{2(k+1)}{3}.$$

\subsubsection{Case 3: $k\equiv 0 \Mod 3$} In this case, there exists a nonnegative integer $j$ such that $k= 3j.$ Therefore we can write $k+1 = 3(j-1) + 2+2.$ So we have $(j-1)$ triples, followed by two pairs. So, similar to the previous case, we will deal with the $(j-1)$ triples using Theorem \ref{hinges}, then apply Lemma \ref{singleDP} twice to get
$$|\mathcal C_k(E)|\leq |\mathcal C_{k-4}(E)|\cdot|\mathcal C_1(E)|\cdot|\mathcal C_1(E)|\lesssim n^\frac{2(k-3)}{3}\cdot n^\frac{4}{3}\cdot n^\frac{4}{3} = n^\frac{2(k+1)}{3}.$$
\end{proof}

It is worth pointing out that in Case 1, if $k$ is also congruent to 5 modulo 6, we can get the same bounds by breaking up every sextuple into three pairs and using Lemma \ref{singleDP}, instead of breaking them into two triples and using Theorem \ref{hinges}.

\subsection{Proof of Proposition \ref{sharpChains}} 
We will work inductively using a process similar to the proof of Theorem 2 of \cite{BS}. Given a natural number $k\geq 2,$ we will select a $k$-tuple of dot products $(\alpha_1, \dots, \alpha_k)$ and construct a set of points that exhibits many $k$-chains of that type. We note that for this construction, we will have the restriction that $\alpha_{2j}=\alpha_{2j-1}$ for all $j=1,\dots, \lfloor k/2\rfloor.$
\begin{proof}
Select a point $(1,1)$ and call it $R_2.$ Now choose some nonzero $\alpha_1\in\mathbb R.$ Place $n-k$ points on the $\alpha_1$-line $\ell_{\alpha_1}(R_2)$. We now set $\alpha_2=\alpha_1.$ Now we have $\approx n$ choices for each of the points $R_1$ and $R_3$ so that $R_1\cdot R_2 = \alpha_1$ and $R_2\cdot R_3 = \alpha_2=\alpha_1.$ Thus, the bound holds for $k = 2$. 

Now select the point $(2,2)$, and call it $R_4.$ Let $\alpha_3$ be the unique nonzero dot product so that $\ell_{\alpha_1}(R_2)$ and $\ell_{\alpha_3}(R_4)$ are coincident. As before, we now have $\approx n$ choices for $R_5$ so that $R_3 \cdot R_4=\alpha_3$ and $R_4 \cdot R_5=\alpha_4=\alpha_3.$

Next, if necessary, we select the point $(3,3)$ and call it $R_6.$ Let $\alpha_5$ be the unique nonzero dot product so that $\ell_{\alpha_3}(R_4)$ and $\ell_{\alpha_5}(R_6)$ are coincident. Let $m=\lceil k/2 \rceil.$ We continue to repeat this process until we have selected the point $(m, m)$ to be $R_k$, if $k$ is even, or $R_{k+1}$, if $k$ is odd. Now, each of the even indexed points $R_j$ are fixed, but the odd indexed points each have $\approx n$ choices. Since there are $m$ odd indices, we have a total of $\approx n^m$ occurrences of the $k$-chain of type $(\alpha_1, \alpha_1, \alpha_2, \alpha_2, \dots, \alpha_m),$ as claimed.
\end{proof}

The idea behind this construction is actually a bit more flexible than written in the proof. The basic idea is to put $\lceil k/2\rceil$ points on the same radial line, and to evenly distribute the rest of them on a family of about $\lfloor k/2\rfloor$ lines perpendicular to the original line. In the construction given above, we put this family of lines all on one line, which restricts the possible values of dot products, $\alpha_j$, slightly more than is necessary.

\subsection{Proof of Corollary \ref{starlike}}\label{starlikeProof}

Given distinct points $P$ and $Q$, the circle of radius $\alpha$ centered at $P$ can intersect the circle of radius $\beta$ centered at $Q$ in at most two points. So the number of points that are of distance $\alpha$ to $P$ and distance $\beta$ to $Q$ is at most two. However, the corresponding property does not necessarily hold for dot products. Namely, there exist distinct points $P$ and $Q$ with infinitely many points that are of dot product $\alpha$ to $P$ and $\beta$ to $Q,$ even with $\alpha, \beta \neq 0.$ This happens when $P$ and $Q$ lie on the same radial line, as the next lemma shows.

\begin{lemma}\label{sameRadial}
If two points, $P$ and $Q$ have $\ell_\alpha(P)=\ell_\beta(Q),$ then $P$ and $Q$ lie on the same radial line.
\end{lemma}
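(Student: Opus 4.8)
The plan is to prove the contrapositive-free direct statement by unpacking the definitions of the two lines and showing that if they coincide as sets, then $P$ and $Q$ must be scalar multiples of one another (equivalently, lie on the same radial line through the origin). Writing $P=(p_1,p_2)$ and $Q=(q_1,q_2)$, the line $\ell_\alpha(P)$ is the locus $p_1 x + p_2 y = \alpha$, and $\ell_\beta(Q)$ is the locus $q_1 x + q_2 y = \beta$. The key geometric fact, already noted in the preliminaries, is that the radial line of a point is \emph{perpendicular} to every $\alpha$-line of that point. So the radial direction of $P$ is the normal direction $(p_1,p_2)$ of $\ell_\alpha(P)$, and likewise $(q_1,q_2)$ is the normal direction of $\ell_\beta(Q)$.

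First I would observe that since $\alpha,\beta\neq 0$, neither $P$ nor $Q$ is the origin, so both lines are genuine (nondegenerate) lines with well-defined normal vectors. Then, two lines in the plane coincide only if they have parallel normal vectors; that is, if $\ell_\alpha(P)=\ell_\beta(Q)$, there is a nonzero scalar $\lambda$ with $(q_1,q_2)=\lambda(p_1,p_2)$. This is exactly the statement that $Q=\lambda P$, i.e. that $Q$ lies on the radial line of $P$, which is what we want. (The matching condition on the constant terms forces $\beta=\lambda\alpha$, but we do not even need this for the conclusion.)

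A clean way to package the normal-parallelism step, and the one I expect matches the flavor of the surrounding arguments, is to argue by contraposition through Lemma \ref{singlePoint}: if $P$ and $Q$ did \emph{not} lie on the same radial line, then by Lemma \ref{singlePoint} there would be \emph{exactly one} point $B$ with $B\cdot P=\alpha$ and $B\cdot Q=\beta$, meaning $\ell_\alpha(P)\cap\ell_\beta(Q)$ is a single point and the two lines are distinct. Contrapositively, if the lines coincide then $P$ and $Q$ do lie on the same radial line. This is the shortest route and reuses an already-proved lemma verbatim.

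The only subtlety — and the place I would be most careful — is the degenerate case where $P$ or $Q$ sits on a coordinate axis, so that one of the coefficients $p_1,p_2$ (or $q_1,q_2$) vanishes; the ``parallel normals'' argument still goes through, but one should check that $\lambda\neq 0$ and that the $\alpha$-lines are not accidentally the whole plane or empty, which is guaranteed precisely because $\alpha,\beta\neq 0$ and the points are not the origin. Routing the proof through Lemma \ref{singlePoint} sidesteps even this, since that lemma already handles arbitrary non-radial-aligned points cleanly, so I expect no real obstacle beyond stating the contrapositive carefully.
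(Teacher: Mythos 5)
Your proposal is correct, and your preferred route is genuinely different from the paper's. The paper proves the lemma by direct computation: it writes $\ell_\alpha(P)$ in slope-intercept form $y=mx+b$, extracts $m=-p_1/p_2$ and $b=\alpha/p_2$, does the same for $\ell_\beta(Q)$, and equates slopes to conclude $p_2/p_1=q_2/q_1$, i.e.\ that $P$ and $Q$ lie on the same line through the origin. Your first argument (coinciding lines have parallel normal vectors, so $Q=\lambda P$) is essentially this same computation phrased with normals instead of slopes --- and it is actually more robust, since the paper's division by $p_2$, $p_1$, $q_1$ silently excludes the cases where $P$ or $Q$ lies on a coordinate axis, whereas the normal-vector formulation needs no such genericity. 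Your second argument, contraposition through Lemma \ref{singlePoint}, is a genuinely different and softer route: if $P$ and $Q$ were not on the same radial line, that lemma forces $\ell_\alpha(P)\cap\ell_\beta(Q)$ to be a single point, contradicting equality of the lines. This is exactly the trick the paper itself uses inside the proof of Lemma \ref{singleDP}, just never applied to Lemma \ref{sameRadial}; it is shorter, reuses proved material, and sidesteps all degenerate coordinate cases, at the cost of being less self-contained and less explicit about the scalar relation $Q=\lambda P$, $\beta=\lambda\alpha$ that the computational proofs expose. Both of your arguments are sound given the paper's standing convention that $\alpha,\beta\neq 0$ (which also rules out either point being the origin, as you note).
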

\begin{proof}
Notice that if $P=(p_1,p_2)$ and $Q=(q_1,q_2)$ are the same point, the conclusion is trivially true. So we now assume that $P\neq Q.$ Let $\ell_\alpha(P)$ have the equation $y=mx+b.$ For any $x\in \mathbb R,$ we must have that $p_1x+p_2y=\alpha.$ Solving this for $y$ yields that
$$y=-\frac{p_1}{p_2}x+\frac{\alpha}{p_2}.$$
So we can see that $m=-\frac{p_1}{p_2}$ and $b=\frac{\alpha}{p_2}.$ But  $\ell_\beta(Q)$ is the same line, with the same equation, so we can similarly argue that $m=-\frac{q_1}{q_2}$ and $b=\frac{\beta}{q_2}.$ Comparing the two expressions for $m$ gives us
$$-\frac{p_1}{p_2} = m = -\frac{q_1}{q_2},$$
which implies that
\begin{equation}\label{sameM}
\frac{p_2}{p_1} = \frac{q_2}{q_1}.
\end{equation}
Notice that $P$ lies on a line through the origin with slope $\frac{p_2}{p_1},$ and $Q$ lies on a line through the origin with slope $\frac{q_2}{q_1}.$ But \eqref{sameM} says that these are the same line through the origin.
\end{proof}

We now state the main two-dimensional estimate in \cite{FK}, by Frankl and Kupavskii, which is Theorem 2 in that paper (an improvement on similar estimates from \cite{PSS}). We refer to a {\it distance $k$-chain} as a $k$-chain defined by distances instead of dot products. Since distances between distinct points are strictly positive, we only concern ourselves with $\alpha_j>0$ when dealing with distances. Let $u_2(n)$ denote the maximum number of pairs of points separated by exactly a unit distance in any set of $n$ points in the plane.

\begin{theorem}\label{FKthm}[Theorem 2 in \cite{FK}]
Given a large, finite set $E$ of $n$ points in $\mathbb R^2,$ any $\epsilon >0,$ and a natural number $k\lesssim 1$, the maximum number of distance $k$-chains of the type $(\alpha_1,\dots, \alpha_k)$ that can exist in $E$ is no more than
$$\lesssim
\begin{cases}
n^\frac{k+3}{3} \qquad & \text{ if } k \equiv 0 \Mod{3}, \\
n^{\frac{k-1}{3}+\epsilon}u_2(n) & \text{ if } k \equiv 1 \Mod{3}, \\
n^\frac{k+4}{3} & \text{ if } k \equiv 2 \Mod{3}.
\end{cases}$$
\end{theorem}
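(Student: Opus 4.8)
The plan is to exploit the one structural feature of distances that dot products lack: whereas two points on a common radial line share infinitely many dot-product partners, two distinct points $P,Q$ admit at most two points $X$ with $|X-P|$ and $|X-Q|$ both prescribed, since two circles meet in at most two points. This \emph{rigidity lemma} is the distance analogue of Lemma \ref{singlePoint}, and it is what will let me ``pin'' interior points of a chain. Concretely, I would fix the quantity
\[
N_k := \max_E\, \#\{(R_1,\dots,R_{k+1})\in E^{k+1} : |R_j-R_{j+1}|=\alpha_j,\ 1\le j\le k\},
\]
and aim to prove the three stated bounds by induction on $k$ in steps of three, treating the residue classes of $k$ modulo $3$ separately.

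First I would record one base case per residue: $N_0=n$ (a $0$-chain is a single point); $N_1\lesssim u_2(n)$ (a $1$-chain is, after rescaling, a pair at a fixed distance, so it is counted by $u_2(n)$ by definition); and $N_2\lesssim n^2$ (the distance hinge bound, obtained by fixing the two endpoints $R_1,R_3$ in $n^2$ ways and using the rigidity lemma to pin the center $R_2$ to at most two positions). The three target exponents then all follow from a single recursive step, $N_k\lesssim n\,N_{k-3}$, carrying an extra factor $n^{\epsilon}$ only along the $k\equiv 1$ tower. Iterated down to the appropriate base case, this gives $n^{(k+3)/3}$ from $N_0=n$ when $k\equiv 0$; $n^{(k-1)/3+\epsilon}u_2(n)$ from $N_1\lesssim u_2(n)$ when $k\equiv 1$; and $n^{(k+4)/3}$ from $N_2\lesssim n^2$ when $k\equiv 2$. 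Thus the whole theorem reduces to showing that appending three edges to a chain costs only a factor of $n$.

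The heart of the argument, and the step I expect to be the main obstacle, is exactly this three-edge extension. Grouping the shorter chains by their endpoint $P=R_{k-2}$ and writing $g(P)$ for the number of ways to attach $R_{k-1},R_k,R_{k+1}$, the goal is $\sum_P f(P)g(P)\lesssim n^{1+o(1)}\sum_P f(P)$, where $f(P)$ counts $(k-3)$-chains ending at $P$ and $\sum_P f(P)=N_{k-3}$. A first pass using only rigidity and the single-distance bound — pin $R_{k-1}$ between $P$ and $R_k$, then control the terminal pair $(R_k,R_{k+1})$ by $u_2(n)$ — already bounds $g(P)$ by $\lesssim u_2(n)\approx n^{4/3}$, but this is too lossy: it would only yield a factor $u_2(n)$, not $n$, per three edges. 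Pushing the factor down to $n^{1+o(1)}$ is the genuine incidence content, and it is where a Szemer\'edi--Trotter count (Theorem \ref{ST}) must be run on the point--circle system generated by the extension, dyadically decomposing circles according to how many chain-endpoints they carry.

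Finally, I would flag the subtlety that makes this harder than the dot-product induction behind Theorem \ref{2chainzgeneral}: deleting a pinned interior point $R_j$ does \emph{not} leave a clean shorter distance chain, since its former neighbors $R_{j-1},R_{j+1}$ are then constrained only to an annulus (the separations for which the two defining circles intersect) rather than to a fixed distance. Managing this correctly is what forces the careful charging and dyadic incidence bookkeeping of \cite{PSS,FK} in place of a one-line recursion; it is also why the $n^{\epsilon}$ loss surfaces only in the $k\equiv 1$ case, which carries the single-distance bottleneck $u_2(n)$ — a quantity inherited from the base case $N_1$ and not known to be a clean power of $n$, so that it is the honest factor to keep rather than its Szemer\'edi--Trotter upper bound $n^{4/3}$. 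Optimizing that bookkeeping is where the real work lies.
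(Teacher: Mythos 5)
First, a point of order: the paper you are being compared against does not actually prove this statement. Theorem \ref{FKthm} is imported wholesale as Theorem 2 of \cite{FK}, and the only internal discussion of its proof is the remark in Subsection \ref{starlikeProof} identifying which feature of the Frankl--Kupavskii argument (two circles meet in at most two points) must be replaced to deduce Corollary \ref{starlike}. So your proposal has to be judged on its own merits. On those merits, your skeleton is arithmetically correct (the base cases $N_0=n$, $N_1\lesssim u_2(n)$, $N_2\lesssim n^2$ do generate the three stated exponents under a step-of-three recursion), and you correctly locate the difficulty in the three-edge extension. But the proposal never proves that extension step, and it is not a detail: it is the entire content of the theorem, so what you have is a restatement of the theorem plus a diagnosis, not a proof.

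The deeper problem is that the reduction you propose cannot be carried out at all, because the per-set, endpoint-conditioned inequality $\sum_P f(P)g(P)\lesssim n^{1+o(1)}\sum_P f(P)$ is strictly stronger than the theorem --- in fact it implies the unit distance conjecture. The weights $f$ live on a single set $E$ and can concentrate. Concretely, let $E_0$ be any $m$-point set with $u$ pairs at distance $\alpha_4$, and build $E$ with $n\approx 3m$ points: a point $P$; a set $A$ of $m$ points on the circle of radius $\alpha_1$ about $P$; and, for each $w\in E_0$, one point $b_w$ on the circle of radius $\alpha_2$ about $P$ with $|b_w-w|=\alpha_3$ (possible for suitable $\alpha_2,\alpha_3$, since the two relevant circles intersect). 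Generic choices keep the $\alpha_1$-pairs of $E$ at $\approx m$, essentially all ending at $P$, while the number of $4$-chains of type $(\alpha_1,\alpha_2,\alpha_3,\alpha_4)$ is at least $|A|\cdot\sum_{w\in E_0}\deg_{\alpha_4}(w)\approx m\,u$, all passing through $P$. Your extension inequality at the junction $k=4$ would then force $m\,u\lesssim n^{1+o(1)}\,m$, i.e.\ $u\lesssim m^{1+o(1)}$ for \emph{every} $m$-point set --- precisely the open unit distance problem, far beyond the known bound $u\lesssim m^{4/3}$; the same configuration defeats the per-set recursion at the $k\equiv 0$ and $k\equiv 2$ junctions too. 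This is exactly why Frankl and Kupavskii's bound retains the \emph{global} extremal quantity $u_2(n)$ in the $k\equiv 1$ case rather than the given set's own $1$-chain count, and why their proof couples blocks globally through point--circle incidence arguments instead of conditioning on the junction point. It also explains the feature your plan cannot reproduce: a single uniform extension lemma would put the $n^{\epsilon}$ loss in every residue class (or none), whereas the stated bounds carry it only for $k\equiv 1$; that asymmetry is an artifact of the genuinely different arguments used in the different residues, and in your write-up it is adopted from the answer rather than derived.
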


The key to the proof of Theorem \ref{FKthm} is a bound on incidences between points and circles, and it heavily relies on the fact that circles of the form $\mathcal C_\alpha(P)$ and $\mathcal C_\beta(Q)$ can intersect in at most two points. In order to follow the proof through for dot product $k$-chains, we would need to replace the circles that come from distances with lines that come from dot products. Now, in general, we would not have the guarantee that $\ell_\alpha(P)$ and $\ell_\beta(Q)$ intersect in a small number of points. However, with the additional hypothesis that no two points lie on the same radial line, we are guaranteed by Lemma \ref{sameRadial} that $\ell_\alpha(P)$ and $\ell_\beta(Q)$ intersect in at most one point, and we can follow the proof through in the cases that $k\equiv 0,2 \Mod{3},$ replacing circles with lines. To handle the case that $k\equiv 1 \Mod{3},$ we notice that by applying Lemma \ref{singleDP} for the maximum number of occurrences of a single dot product, we can replace $u_2(n)$ with $n^\frac{4}{3}.$

\subsection{Proof of Corollary \ref{lightLines}}

The following result is a rephrasing of what is proved in \cite{BS} (Theorem 2 and Remark 3 in that paper). It says that if we have some bounds on the number of points on a line, we can get better hinge bounds than the general case, as in Lemma \ref{hinges}.

\begin{theorem}\label{BSep}[Barker and Senger]
Given a large, finite set $E$ of $n$ points in $\mathbb R^2$, with no more than $t$ points on any line, the maximum number of hinges of the type $(\alpha_1, \alpha_2)$ that can exist in $E$, is $\lesssim \left(\log_2 n \right)^2 tn^{\frac{4}{3}}.$
\end{theorem}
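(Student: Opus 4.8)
The plan is to organize the count around the middle point of each hinge. Write $H$ for the number of triples $(R_1,R_2,R_3)\in E^3$ with $R_1\cdot R_2=\alpha_1$ and $R_2\cdot R_3=\alpha_2$. First I would discard the origin, since it has dot product $0\neq\alpha_j$ with every point. For a fixed middle point $R_2$, the admissible choices of $R_1$ are exactly the points of $E$ on $\ell_{\alpha_1}(R_2)$, and the admissible choices of $R_3$ are exactly the points of $E$ on $\ell_{\alpha_2}(R_2)$. Setting $i_j(R_2):=|E\cap \ell_{\alpha_j}(R_2)|$, these two choices are independent, so
\begin{equation*}
H=\sum_{R_2\in E} i_1(R_2)\,i_2(R_2).
\end{equation*}

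The two hypotheses then feed the two factors separately. Since $\alpha_2\neq 0$, the set $\ell_{\alpha_2}(R_2)$ is a genuine affine line (it does not pass through the origin), so the light-line hypothesis yields the uniform bound $i_2(R_2)\le t$ for every $R_2$. Pulling this out gives $H\le t\sum_{R_2\in E} i_1(R_2)$. The remaining sum is precisely the number of ordered pairs $(R_1,R_2)\in E^2$ with $R_1\cdot R_2=\alpha_1$, which—up to the harmless factor of two from ordering and the at most $n$ diagonal terms—is the number of times the single dot product $\alpha_1$ is realized. By Lemma \ref{singleDP}, equivalently by applying Szemer\'edi--Trotter (Theorem \ref{ST}) to the $n$ points of $E$ and the $n$ distinct lines $\{\ell_{\alpha_1}(R):R\in E\}$, this is $\lesssim n^{\frac{4}{3}}$. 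Hence $H\lesssim t\,n^{\frac{4}{3}}$, which in particular gives the claimed bound, since $t\,n^{\frac{4}{3}}\le (\log_2 n)^2\, t\, n^{\frac{4}{3}}$.

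I would then remark on where the two logarithmic factors in the stated form come from. The direct argument above uses that \emph{every} $\alpha_j$-line holds at most $t$ points; if only an averaged control were available, or if one wanted a method that transfers to the distance setting (where two centers share two common-distance points rather than a single intersection, as discussed before Lemma \ref{sameRadial}), one would instead sort the middle points $R_2$ into dyadic classes according to the richness of $\ell_{\alpha_1}(R_2)$ and of $\ell_{\alpha_2}(R_2)$, control each class by a Szemer\'edi--Trotter rich-point estimate together with the light-line cap, and sum. The two dyadic decompositions each contribute $\lesssim \log_2 n$ levels, producing the $(\log_2 n)^2$ factor; this is presumably the route taken in \cite{BS}, and it reproves the displayed bound.

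The hard part is therefore not the estimate itself but the bookkeeping that makes it legitimate: one must check that each $\ell_{\alpha_j}(R_2)$ is genuinely a line, which fails only at the origin and is handled at the outset, and that the lines $\{\ell_{\alpha_1}(R):R\in E\}$ are pairwise distinct so that Szemer\'edi--Trotter applies with exactly $n$ lines. This distinctness was already established in the proof of Lemma \ref{singleDP}, so once those points are invoked the bound is immediate.
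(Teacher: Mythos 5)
Your proof is correct, but there is nothing in this paper to compare it against: the paper never proves Theorem \ref{BSep} itself. It is imported from \cite{BS} (the authors describe it as a rephrasing of Theorem 2 and Remark 3 of that paper) and is used only as a black box in the proof of Corollary \ref{lightLines}. Your argument stands on its own and uses only tools already present here: writing the hinge count as $\sum_{R_2\in E} i_1(R_2)\,i_2(R_2)$, capping $i_2(R_2)\le t$ uniformly by the light-line hypothesis (valid since $\ell_{\alpha_2}(R_2)$ is a genuine line once the origin is discarded, which is harmless because $\alpha_1,\alpha_2\neq 0$), and bounding $\sum_{R_2} i_1(R_2)$ as the incidence count between the $n$ points of $E$ and the $n$ lines $\{\ell_{\alpha_1}(R):R\in E\}$ via Theorem \ref{ST}; the pairwise distinctness of those lines is exactly what is verified inside the proof of Lemma \ref{singleDP}, as you note. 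Two remarks on what your route buys. First, your bound $t\,n^{4/3}$ is strictly stronger than the stated $(\log_2 n)^2\,t\,n^{4/3}$: under the \emph{uniform} hypothesis that no line carries more than $t$ points, the logarithmic factors are indeed unnecessary, and your diagnosis that they arise from dyadic pigeonholing over line richness in \cite{BS} (a device needed when no uniform cap is assumed, or in the distance setting) is the right explanation. Second, this improvement propagates: substituting your version of Theorem \ref{BSep} into the proof of Corollary \ref{lightLines} removes all the $\left(\log_2 n\right)$ factors from that corollary, and hence from Corollary \ref{sAdapt} as well, so your argument is not merely an alternative proof but a quantitative sharpening of the chain of results built on this theorem.
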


To prove Corollary \ref{lightLines}, we repeat the proof of Theorem \ref{2chainzgeneral}, but use Theorem \ref{BSep} in place of Lemma \ref{hinges}. As before, we separate into three cases, depending on the congruence classes of $k$ modulo 3. Here we merely note the necessary modifications of each case of the proof of Theorem \ref{2chainzgeneral}.

\subsubsection{Case 1: $k \equiv 2 \Mod 3$} Find $j$ so that $k+1=3j$, and apply Theorem \ref{BSep} $j$ times to get an upper bound of
$$\lesssim \left(\left(\log_2 n \right)^2 tn^{\frac{4}{3}}\right)^j \approx \left(\log_2 n \right)^\frac{2k+2}{3} t^\frac{k+1}{3}n^{\frac{4k+4}{9}}.$$

\subsubsection{Case 2: $k\equiv 1 \Mod 3$} Set $j$ so that $k+1=3j+2$. We apply Theorem \ref{BSep} $j$ times and Lemma \ref{singleDP} once to get an upper bound of
$$\lesssim \left(\left(\log_2 n \right)^2 tn^{\frac{4}{3}}\right)^j\cdot n^\frac{4}{3} \approx \left(\log_2 n \right)^\frac{2k-2}{3} t^\frac{k-1}{3}n^{\frac{4k+8}{9}}.$$

\subsubsection{Case 3: $k\equiv 0 \Mod 3$} Fix $j$ so that $k+1=3(j-1)+2+2$. We apply Theorem \ref{BSep} $(j-1)$ times and Lemma \ref{singleDP} twice to get an upper bound of
$$\lesssim \left(\left(\log_2 n \right)^2 tn^{\frac{4}{3}}\right)^{(j-1)}\cdot n^\frac{4}{3}\cdot n^\frac{4}{3} \approx \left(\log_2 n \right)^\frac{2k-6}{3} t^\frac{k-3}{3}n^{\frac{4k+12}{9}}.$$

\subsection{Proof of Corollary \ref{hiDim}}

In order to prove Corollary \ref{hiDim}, we mimic the proof of Corollary \ref{lightLines}, but appeal to higher dimensional incidence theorems. As the construction in Remark \ref{3Lenz} shows, we have no hope to prove nontrivial estimates in higher dimensions in general, so we restrict ourselves to the case of sets where we have some control on the distribution of points. To this end, we introduce two estimates. The first is due to Ben Lund, in \cite{Lund}.

\begin{theorem}\label{lundHinges}[Lund, in \cite{Lund}]
Given a large, finite set $E$ of $n$ points in $\mathbb R^d$, with no more than $r$ points on any $(d-1)$-hyperplane, and no more than $t$ points on any $(d-2)$-hyperplane, a pair of nonzero real numbers $(\alpha_1, \alpha_2)$, and any $\epsilon>0,$ the maximum number of triples of points, $(R_1, R_2, R_3)\in E^3$ such that $R_1\cdot R_2 = \alpha_1$ and $R_2\cdot R_3 = \alpha_2$ is no more than
$$\lesssim nt^2+n^{\frac{4d-3}{2d-1}+\epsilon}t^{\frac{2d-2}{2d-1}+\epsilon}+nr.$$
\end{theorem}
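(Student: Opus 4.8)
The plan is to imitate the center-point counting behind the planar hinge bounds (Theorem \ref{hinges} and Theorem \ref{BSep}), replacing the lines $\ell_\alpha(A)$ by their $\mathbb{R}^d$ analogues, the $\alpha$-hyperplanes $\ell_\alpha(A)=\{x\in\mathbb{R}^d : A\cdot x=\alpha\}$, and replacing Szemer\'edi--Trotter by a point--hyperplane incidence bound in $\mathbb{R}^d$. First I would record the basic geometry: a point $B$ satisfies $A\cdot B=\alpha$ exactly when $B$ lies on the $(d-1)$-hyperplane $\ell_\alpha(A)$, so a hinge centered at $R_2$ forces $R_1\in\ell_{\alpha_1}(R_2)$ and $R_3\in\ell_{\alpha_2}(R_2)$. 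Writing $f(R_2)=|E\cap\ell_{\alpha_1}(R_2)|$ and $g(R_2)=|E\cap\ell_{\alpha_2}(R_2)|$, the hinge count becomes
$$H=\sum_{(R_1,R_3)\in E^2}\bigl|E\cap\ell_{\alpha_1}(R_1)\cap\ell_{\alpha_2}(R_3)\bigr|=\sum_{R_2\in E}f(R_2)\,g(R_2),$$
and since each $\alpha$-hyperplane is a $(d-1)$-hyperplane, the hypothesis gives $f,g\le r$.

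Next I would split the left-hand sum according to the relative position of $\ell_{\alpha_1}(R_1)$ and $\ell_{\alpha_2}(R_3)$. Exactly as in the coincidence computation inside the proof of Lemma \ref{singleDP}, the hyperplanes $\{R_1\cdot x=\alpha_1\}$ and $\{R_3\cdot x=\alpha_2\}$ coincide only if $(R_3,\alpha_2)=\lambda(R_1,\alpha_1)$, forcing $\lambda=\alpha_2/\alpha_1$ and $R_3=(\alpha_2/\alpha_1)R_1$. Thus for each $R_1$ there is at most one coincident partner $R_3\in E$, and each such pair contributes at most $r$ interior points $R_2$; this produces the $nr$ term. When the two hyperplanes are parallel and distinct the intersection is empty. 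The remaining transversal case is the substance: there $\ell_{\alpha_1}(R_1)\cap\ell_{\alpha_2}(R_3)$ is a $(d-2)$-flat carrying at most $t$ points of $E$, and the trivial estimate $n^2t$ must be improved.

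For the transversal contribution I would return to the center form $\sum_{R_2}f(R_2)g(R_2)$ and run a dyadic decomposition on the degrees, grouping the points $R_2$ whose degree $f(R_2)$ or $g(R_2)$ lies in a range $[2^a,2^{a+1})$ and bounding the number of such rich centers by a point--hyperplane incidence (rich-points) estimate in $\mathbb{R}^d$ of the type established by Elekes and T\'oth, and by Apfelbaum and Sharir. The hypothesis that no $(d-2)$-flat carries more than $t$ points supplies precisely the non-degeneracy these bounds require. Balancing each dyadic sum at the threshold where the incidence estimate overtakes the trivial bound $n$ on the number of rich centers yields the main term $n^{\frac{4d-3}{2d-1}}t^{\frac{2d-2}{2d-1}}$, with the power of $t$ recording the flat-degeneracy control and the additive degenerate piece (centers whose neighbors concentrate on a single $(d-2)$-flat) contributing $nt^2$. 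The dyadic summations cost only logarithmic factors, which the $\epsilon$ in the exponent absorbs, in the same way the $(\log_2 n)$ powers appear in Corollary \ref{lightLines}.

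The hard part will be the transversal incidence estimate itself: one must rule out the configurations in which points of $E$ pile onto low-dimensional flats, since for such degenerate arrangements the generic point--hyperplane bound fails and the count really can reach $n^2t$. This is exactly why both hypotheses are present, and keeping them separate is the key. The cap $r$ on $(d-1)$-hyperplanes bounds the individual degrees $f,g$ and governs the coincident case, while the cap $t$ on $(d-2)$-flats eliminates the degenerate incidence patterns and fixes the power of $t$ in the main term. Tracking how $r$ and $t$ propagate through the dyadic sum, and verifying that the balanced exponents collapse to the claimed $\tfrac{4d-3}{2d-1}$ and $\tfrac{2d-2}{2d-1}$, is the principal bookkeeping obstacle; the geometric reductions above are routine by comparison.
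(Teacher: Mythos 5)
First, context: the paper does not prove Theorem \ref{lundHinges} at all. It is imported from Lund \cite{Lund}, and the only indication of its proof is the remark that it ``is an application of'' Theorem \ref{dUDP}, the point--hyperplane incidence bound of Lund, Sheffer, and de Zeeuw \cite{LSdZ} (which in turn rests on \cite{FPSSZ}). So your proposal can only be measured against that attribution and against whether your plan would actually produce the stated exponents. Your geometric reductions are correct and are the right opening moves: the center form $\sum_{R_2} f(R_2)g(R_2)$; the coincidence analysis (for each $R_1$ the only possible coincident partner is $R_3=(\alpha_2/\alpha_1)R_1$, so that case contributes $\lesssim nr$); the emptiness of the parallel case; and the identification of the transversal intersections as $(d-2)$-flats carrying at most $t$ points.

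The genuine gap is the incidence input. The Elekes--T\'oth and Apfelbaum--Sharir bounds for non-degenerate hyperplanes have exponents with denominator $d+1$ and no quantitative dependence on $t$: the rich-hyperplane count there is of the shape $n^d/s^{d+1}+n^{d-1}/s^{d-1}$. Feeding this into your dyadic scheme and balancing against the trivial bound $n$ on the number of rich centers yields a main term of order $n^{\frac{3d-1}{d+1}}$ with no power of $t$; for $d\ge 3$ this exponent exceeds $\frac{4d-3}{2d-1}$, and for $d=3$ the secondary term $n^{d-1}/s^{d-1}=n^2/s^2$ makes the dyadic sum degenerate to $\approx n^2\log n$, so no amount of bookkeeping will collapse this input to the claimed exponents --- the two bounds are genuinely incomparable, and yours is not the one being asserted. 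The exponents $\frac{4d-3}{2d-1}$ and $\frac{2d-2}{2d-1}$ (and the $\epsilon$ losses) are the signature of Theorem \ref{dUDP}, stated immediately after this theorem in the paper. From it one extracts, for $s\gtrsim t$, the rich-center bound $|\{R_2: f(R_2)\ge s\}|\lesssim n^{d+\epsilon}t^{d-1}s^{-(2d-1)}+n s^{-1}$, and then your outline closes exactly as you describe: centers with $f,g\lesssim t$ give $nt^2$; the dyadic range between $t$ and the threshold $s_0\approx (nt)^{\frac{d-1}{2d-1}}$, where this bound meets the trivial bound $n$, contributes $\lesssim n\,s_0^2=n^{\frac{4d-3}{2d-1}}t^{\frac{2d-2}{2d-1}}$; and the very rich tail, where $|\{f\ge s\}|\lesssim n/s$ and $s\le r$, gives a second source of $nr$. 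So the repair is to replace the Elekes--T\'oth/Apfelbaum--Sharir citation by Theorem \ref{dUDP} and carry out that bookkeeping; as written, your central step would fail to deliver the theorem.
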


This bound is an application of the following result, due to Lund, Sheffer, and de Zeeuw, from \cite{LSdZ}, but is based on the work of Fox, Pach, Suk, Sheffer, and Zahl, from \cite{FPSSZ}.

\begin{theorem}\label{dUDP}[Lund, Sheffer, and de Zeeuw, from \cite{LSdZ}]
Given a large, finite set $E$ of $n$ points and $m$ $(d-1)$-hyperplanes in $\mathbb R^d$, with no more than $t$ points on any pair of hyperplanes, and any $\epsilon>0,$ the maximum number of incidences of points and hyperplanes is no more than
$$\lesssim mt+m^{\frac{2(d-1)}{2d-1}+\epsilon}n^\frac{d}{2d-1}t^{\frac{d-1}{2d-1}}+n.$$
\end{theorem}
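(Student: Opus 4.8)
The plan is to prove the bound by polynomial partitioning, following the scheme of Fox, Pach, Suk, Sheffer, and Zahl in \cite{FPSSZ}; the statement is exactly a point--hyperplane specialization of their semialgebraic incidence machinery, with the hypothesis that at most $t$ points lie on any pair of hyperplanes playing the role of the non-degeneracy (bounded-multiplicity) condition. Let $\mathcal H$ be the family of $m$ hyperplanes and $E$ the set of $n$ points, and let $I$ denote the number of incident point--hyperplane pairs. The first thing I would record is that the bipartite incidence graph is $K_{2,t+1}$-free: since any two distinct hyperplanes meet in a $(d-2)$-flat carrying at most $t$ points of $E$, no two hyperplanes can be simultaneously incident to $t+1$ common points. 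This is the only place the hypothesis enters, and it is what ultimately produces the factors of $t$.

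Next I would apply the Guth--Katz polynomial partitioning theorem to $E$ with a degree parameter $D$ to be fixed at the end: there is a nonzero polynomial $P$ of degree $\le D$ so that $\mathbb R^d \setminus Z(P)$ has $\lesssim D^d$ connected cells, each meeting $\lesssim n/D^d$ points of $E$. I would split $I = I_{\mathrm{cell}} + I_{\mathrm{zero}}$ according to whether the incident point lies in an open cell or on $Z(P)$. For $I_{\mathrm{cell}}$, the key geometric input is a crossing estimate: restricting $P$ to a hyperplane $H \cong \mathbb R^{d-1}$ gives a polynomial of degree $\le D$, so by the Milnor--Thom bound $H \setminus Z(P|_H)$ has $\lesssim D^{d-1}$ components, and hence $H$ meets $\lesssim D^{d-1}$ cells. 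Summing, the total number of hyperplane--cell incidences is $\lesssim mD^{d-1}$, so a typical cell sees roughly $n/D^d$ points and $m/D$ hyperplanes. Feeding these per-cell parameters back into the bound recursively (and, at the base of the recursion, into the K\H{o}v\'ari--S\'os--Tur\'an estimate for the $K_{2,t+1}$-free graph) and summing over the $\lesssim D^d$ cells yields the cell contribution.

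The genuinely delicate part is $I_{\mathrm{zero}}$, the incidences occurring on $Z(P)$, and this is where I expect the main obstacle to lie. A hyperplane $H$ either meets $Z(P)$ in a $(d-2)$-dimensional variety or is entirely contained in $Z(P)$. At most $D$ hyperplanes can be contained in $Z(P)$, since each forces a distinct linear factor of $P$; for these the trivial count $D \cdot n$ is far too lossy, and the non-degeneracy hypothesis must be invoked to show that the points of $E$ lying on $Z(P)$ cannot be over-concentrated on the $(d-2)$-flats common to these hyperplanes. For the remaining hyperplanes I would induct on the dimension $d$: restricting the configuration to $Z(P)$ reduces matters to a point--hyperplane incidence problem in dimension $d-1$, in which the same $K_{2,t+1}$-free structure is inherited. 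Managing this dimensional induction--propagating the non-degeneracy hypothesis, cleanly handling the lower-dimensional pieces of $Z(P)$, and absorbing the accumulated loss into the $\epsilon$ in the exponent--is the technical heart of the argument.

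Finally I would optimize the degree, choosing $D$ of the order that balances $I_{\mathrm{cell}}$ against $I_{\mathrm{zero}}$ (a power of $n$ and $m$ whose exponents have denominator $2d-1$, reflecting the competition between the $\lesssim D^d$ cells and the $\lesssim D^{d-1}$ crossing bound). This balancing produces the main term $m^{\frac{2(d-1)}{2d-1}+\epsilon}n^{\frac{d}{2d-1}}t^{\frac{d-1}{2d-1}}$, while the degenerate and base cases of the recursion supply the remaining $mt$ and $n$ terms. An alternative to running the partitioning from scratch is to quote the general semialgebraic Zarankiewicz bound of \cite{FPSSZ} directly and verify that the hyperplane configuration, with its $K_{2,t+1}$-free incidence graph, meets its hypotheses; this packages the hardest step but still requires the same translation of the non-degeneracy condition.
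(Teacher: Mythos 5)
The paper does not prove this statement at all: it is imported verbatim as a known result of Lund, Sheffer, and de Zeeuw \cite{LSdZ} (building on \cite{FPSSZ}) and used as a black box in the proof of Corollary \ref{hiDim}, so there is no internal proof to compare yours against. Measured against the actual argument in the literature, your reconstruction has the right architecture: the $K_{2,t+1}$-freeness observation (two distinct hyperplanes meet in at most a $(d-2)$-flat, hence share at most $t$ points of $E$) is indeed the only place the hypothesis enters; polynomial partitioning of degree $D$ with $\lesssim D^d$ cells, the crossing bound of $\lesssim D^{d-1}$ cells per hyperplane via Milnor--Thom/Warren, K\H{o}v\'ari--S\'os--Tur\'an inside the cells, and a separate treatment of incidences on $Z(P)$ (with at most $D$ hyperplanes fully contained in it, each forcing a linear factor) is exactly how the exponents with denominator $2d-1$ arise.

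Two caveats. First, your proposal is a plan rather than a proof: the parts you yourself flag as ``the technical heart'' --- the incidences on $Z(P)$, the induction on dimension, and absorbing the accumulated losses into the $\epsilon$ --- are precisely where the difficulty lives, and they are deferred, not done. Second, and more substantively, your fallback of quoting the general semialgebraic Zarankiewicz bound of \cite{FPSSZ} directly does not deliver this statement as written: in \cite{FPSSZ} the forbidden bipartite graph size is treated as a constant, so the implied constants depend on $t$ in an unquantified way, whereas the theorem here requires the explicit polynomial factor $t^{\frac{d-1}{2d-1}}$ with $t$ permitted to grow with $n$ (essential in the application, where $t$ counts points on $(d-2)$-hyperplanes and need not be bounded). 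This is exactly why Lund, Sheffer, and de Zeeuw rework the partitioning argument with $t$ tracked explicitly instead of citing \cite{FPSSZ} as a black box; if you pursue your proposal, you must do the same, carrying $t$ through the cell recursion and the dimensional induction --- that is, your first route without the shortcut.
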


Though they are defined slightly differently in the last two results, for our purposes, both references to the quantity $t$ will coincide. Now we are ready to proceed. As before, we separate into three cases, depending on the congruence classes of $k$ modulo 3. Again, we merely note the necessary modifications of each case of the proof of Theorem \ref{2chainzgeneral}.

\subsubsection{Case 1: $k \equiv 2 \Mod 3$} Find $j$ so that $k+1=3j$, and apply Theorem \ref{lundHinges} $j$ times, setting $\epsilon' = \epsilon^\frac{3}{k+1}$. This yields an upper bound of
$$\lesssim \left(nt^2+n^{\frac{4d-3}{2d-1}+\epsilon'}t^{\frac{2d-2}{2d-1}+\epsilon'}+nr\right)^j$$ 
$$\approx n^\frac{k+1}{3}t^\frac{2k+2}{3}+n^{\frac{(4d-3)(k+1)}{6d-3}+\epsilon}t^{\frac{(2d-2)(k+1)}{6d-3}+\epsilon}+n^\frac{k+1}{3}r^\frac{k+1}{3}.$$

\subsubsection{Case 2: $k\equiv 1 \Mod 3$} Set $j$ so that $k+1=3j+2$. We apply Theorem \ref{lundHinges} $j$ times, each time with $\epsilon' = \epsilon^\frac{3}{k-1}$. We also apply Lemma \ref{dUDP}, with $m = n$ and $\epsilon''=\epsilon-\epsilon',$ since each point generates a unique hyperplane, akin to the $\alpha$-lines before. Notice that by definition, $t \leq r.$ This yields an upper bound of
$$\lesssim \left(nt^2+n^{\frac{4d-3}{2d-1}+\epsilon'}t^{\frac{2d-2}{2d-1}+\epsilon'}+nr\right)^j\cdot \left(nt+n^{\frac{2(d-1)}{2d-1}+\epsilon''}n^\frac{d}{2d-1}t^{\frac{d-1}{2d-1}}+n \right)$$
$$\approx n^\frac{k+2}{3}r^\frac{k-1}{3}t+n^{\frac{(4d-3)(k-1)+9d-6}{6d-3}+\epsilon}r^\frac{k-1}{3}t^\frac{d-1}{2d-1},$$
where we have omitted some cross terms, as they will always be dominated by terms present in the final expression given.

\subsubsection{Case 3: $k\equiv 0 \Mod 3$} Fix $j$ so that $k+1=3(j-1)+2+2$. Similarly to the previous case, we apply Theorem \ref{lundHinges} $(j-1)$ times and Lemma \ref{dUDP} twice, as in the previous case, with appropriate values of $\epsilon', \epsilon'', m, r,$ and $t$, to get an upper bound of 
$$\lesssim \left(nt^2+n^{\frac{4d-3}{2d-1}+\epsilon'}t^{\frac{2d-2}{2d-1}+\epsilon'}+nr\right)^{j-1}\cdot \left(nt+n^{\frac{2(d-1)}{2d-1}+\epsilon''}n^\frac{d}{2d-1}t^{\frac{d-1}{2d-1}}+n \right)^2$$
$$\approx n^\frac{k+3}{3}r^\frac{k-3}{3}t^2+n^{\frac{(4d-3)(k-1)+18d-8}{6d-3}+\epsilon}r^\frac{k-3}{3}t^\frac{2d-2}{2d-1},$$
where we have again omitted some cross terms.


\begin{thebibliography}{99}

\bibitem{Bahls} P. Bahls, {\it Channel assignment on Cayley graphs.} J. Graph Theory, 67: 169--177, (2011). doi: 10.1002/jgt.20523

\bibitem{BS} D. Barker and S. Senger, {\it Upper bounds on pairs of dot products,} Journal of Combinatorial Mathematics and Combinatorial Computing, Volume 103, November, 2017, pp. 211--224.

\bibitem{Fickus} J. J. Benedetto and M. Fickus, {\it Finite normalized tight frames,} Adv. Comput. Math. 18, pp. 357--385 (2003).

\bibitem{BIT} M. Bennett, A. Iosevich, and K. Taylor, {\it Finite chains inside thin subsets of ${\Bbb R}^d$,} Analysis and PDE, volume 9, no. 3, (2016).

\bibitem{BMP} P. Brass, W. Moser, and J. Pach, {\it Research Problems in Discrete Geometry,} Springer (2000), 499 pp.

\bibitem{Erd46} P.~Erd\H{o}s, {\it On sets of distances of $n$ points}, Amer. Math. Monthly {\bf 53} (1946) 248--250.

\bibitem{FPSSZ} J. Fox, J. Pach, A. Suk, A. Sheffer, and J. Zahl, {\it A semi-algebraic version of Zarankiewicz's problem,} J. of the European Mathematical Society,Volume 19, Issue 6, 2017, pp. 1785--1810. DOI: 10.4171/JEMS/705.

\bibitem{FK} N. Frankl and A. Kupavskii, {\it Almost sharp bounds on the number of discrete chains in the plane}, arXiv:1912.00224.

\bibitem{GIS} J. Garibaldi, A. Iosevich, and S. Senger, {\it Erd\H os distance problem,} AMS Student Library Series, 56, (2011).

\bibitem{GK} L. Guth and N. H. Katz, {\it On the Erd\H os distinct distance problem in the plane,} Annals of Math., Pages 155--190, Volume 181 (2015), Issue 1.

\bibitem{IJL} A. Iosevich, H. Jorati, and I. \L aba, {\it Geometric incidence theorems via Fourier analysis} Trans. Amer. Math. Soc. \textbf{361} (2009) 6595--6611.

\bibitem{IMS} A. Iosevich, M. Mourgoglou, and S. Senger {\it On sets of directions and angles determined by subsets of $\mathbb R^d$,} Journal D'Analyse, 116 no. 1, 355--369, (2012).

\bibitem{IRU} A. Iosevich, M. Rudnev, and I. Uriarte-Tuero, {\it Theory of dimension for large discrete sets and applications}, Math. Model. Nat. Phenom. 9 (2014), no. 5, 148--169.

\bibitem{IS} A. Iosevich and S. Senger, {\it Orthogonal systems in vector spaces over finite fields,} Electronic J. of Combinatorics, Volume 15, December (2008).

\bibitem{IS2} A. Iosevich and S. Senger, {\it Sharpness of the Falconer $\frac{d+1}{2}$ estimate,}  Annales Academiae Scientiarum Fennicae Mathematica, vol 41, 2016, pp. 713--720.

\bibitem{Lund} B. Lund, {\it Incidences and pairs of dot products}, arXiv:1509.01072.

\bibitem{LSdZ} B. Lund, A. Sheffer, and F. de Zeeuw, {\it Bisector energy and few distinct distances}, F. Discrete Comput Geom (2016) 56: 337. https://doi.org/10.1007/s00454-016-9783-5.

\bibitem{PSS} E. Palsson, A. Scheffer, and S. Senger, {\it On the number of discrete chains,} arXiv:1902.08259, (2019) (submitted).

\bibitem{SST84} J. Spencer, E. Szemer\' edi, W. T. Trotter. {\it Unit distances in the Euclidean plane}, Graph theory and combinatorics (1984): 293--303.

\bibitem{ST83} E. Szemer\' edi and W. T. Trotter, Jr., {\it Extremal problems in discrete geometry,} Combinatorica 3 (1983), no. 3-4, pp. 381--392.

\bibitem{Steinerberger} S. Steinerberger, {\it A note on the number of different inner products generated by a finite set of vectors,} Discrete Mathematics, 310, (2010), pp. 1112--1117.

\bibitem{TV12} T. V. Pham and L. A. Vinh, {\it Orthogonal systems in vector spaces over finite rings,} 
Electronic J. of Combinatorics, Volume 19, Issue 2 (2012)



\end{thebibliography}
\end{document}